\newtheorem{thm}{Theorem}[section]
\newtheorem{defn}[thm]{Definition}
\numberwithin{equation}{section}
\begin{document}

\title{\bf Affine Ricci solitons of three-dimensional Lorentzian Lie groups}
\author{Yong Wang}

\thanks{{\scriptsize
\hskip -0.4 true cm \textit{2010 Mathematics Subject Classification:}
53C40; 53C42.
\newline \textit{Key words and phrases:}Canonical connections; Kobayashi-Nomizu connections; perturbed canonical connections; perturbed Kobayashi-Nomizu connections; affine Ricci solitons; three-dimensional Lorentzian Lie groups }}

\maketitle

\begin{abstract}
 In this paper, we classify affine Ricci solitons associated to canonical connections and Kobayashi-Nomizu connections and perturbed canonical connections and perturbed Kobayashi-Nomizu connections on three-dimensional Lorentzian Lie groups with
 some product structure.
\end{abstract}

\vskip 0.2 true cm


\pagestyle{myheadings}
\markboth{\rightline {\scriptsize Wang}}
         {\leftline{\scriptsize Affine Ricci solitons}}

\bigskip
\bigskip


\section{ Introduction}
\indent The concept of the Ricci soliton is introduced by Hamilton in \cite{Ha}, which ia a naturel generalization of Einstein metrics.
Study of Ricci soliton over different geometric spaces is one of interesting topics in geometry and mathematical physics. In particular, it has become more important after G. Perelman applied Ricci solitons to solve the long standing Poincare conjecture.  In \cite{SO1,SO2,W1,W2,QW,HDZ}, Einstein manifolds associated to affine connections (especially semi-symmetric metric connections
and semi-symmetric non-metric connections) were studied (see the definition 3.2 in \cite{W2} and the definition 3.1 in \cite{HDZ}). It is natural
to study Ricci solitons associated to affine connections. Affine Ricci solitons had been introduced and studied, for example, see \cite{Cr,HD,HPC,PY,SCB}. In \cite{Ca0},
 Calvaruso studied three-dimensional generalized Ricci solitons, both in Riemannian and Lorentzian settings. He determined their homogeneous models, classifying left-invariant generalized Ricci solitons on three-dimensional Lie groups. Then it is natural to classify affine Ricci solitons
on three-dimensional Lie groups. In \cite{ES}, Etayo and Santamaria studied some affine connections on manifolds with the product structure
or the complex structure. In particular, the canonical connection and the Kobayashi-Nomizu connection for a product structure was studied. In \cite{W3}, we introduced a particular product structure on three-dimensional Lorentzian Lie groups and we computed canonical connections and Kobayashi-Nomizu connections and their curvature on three-dimensional Lorentzian Lie groups with
  this product structure. We defined algebraic Ricci solitons associated to canonical connections and Kobayashi-Nomizu connections. We classified algebraic Ricci solitons associated to canonical connections and Kobayashi-Nomizu connections on three-dimensional Lorentzian Lie groups with
 this product structure. In this paper, we classify affine Ricci solitons associated to canonical connections and Kobayashi-Nomizu connections and perturbed canonical connections and perturbed Kobayashi-Nomizu connections on three-dimensional Lorentzian Lie groups with
 this product structure.\\
\indent In Section 2, we classify affine Ricci solitons associated to canonical connections and Kobayashi-Nomizu connections on three-dimensional Lorentzian Lie groups with
 this product structure.
In Section 3, we classify affine Ricci solitons associated to perturbed canonical connections and perturbed Kobayashi-Nomizu connections on three-dimensional Lorentzian Lie groups with
 this product structure.


\vskip 1 true cm

\section{ Affine Ricci solitons associated to canonical connections and Kobayashi-Nomizu connections on three-dimensional Lorentzian Lie groups}

Three-dimensional Lorentzian Lie groups had been classified in \cite{Ca1,CP}(see Theorem 2.1 and Theorem 2.2 in \cite{BO}). Throughout this paper,
we shall by $\{G_i\}_{i=1,\cdots,7}$, denote the connected, simply connected three-dimensional Lie group equipped with a left-invariant Lorentzian metric $g$ and
having Lie algebra $\{\mathfrak{g}\}_{i=1,\cdots,7}$. Let $\nabla$ be the Levi-Civita connection of $G_i$ and $R$ its curvature tensor, taken with the convention
\begin{equation}
R(X,Y)Z=\nabla_X\nabla_YZ-\nabla_Y\nabla_XZ-\nabla_{[X,Y]}Z.
\end{equation}
The Ricci tensor of $(G_i,g)$ is defined by
\begin{equation}\rho(X,Y)=-g(R(X,e_1)Y,e_1)-g(R(X,e_2)Y,e_2)+g(R(X,e_3)Y,e_3),
\end{equation}
where $\{e_1,e_2,e_3\}$ is a pseudo-orthonormal basis, with $e_3$ timelike.
We define a product structure $J$ on $G_i$ by
\begin{equation}Je_1=e_1,~Je_2=e_2,~Je_3=-e_3,
\end{equation}
then $J^2={\rm id}$ and $g(Je_j,Je_j)=g(e_j,e_j)$. By \cite{ES}, we define the canonical connection and the Kobayashi-Nomizu connection as follows:
\begin{equation}\nabla^0_XY=\nabla_XY-\frac{1}{2}(\nabla_XJ)JY,
\end{equation}
\begin{equation}\nabla^{{\rm 1}}_XY=\nabla^0_XY-\frac{1}{4}[(\nabla_YJ)JX-(\nabla_{JY}J)X].
\end{equation}
We define\begin{equation}
R^0(X,Y)Z=\nabla^0_X\nabla^0_YZ-\nabla^0_Y\nabla^0_XZ-\nabla^0_{[X,Y]}Z,
\end{equation}
\begin{equation}
R^{{1}}(X,Y)Z=\nabla^{{ 1}}_X\nabla^1_YZ-\nabla^1_Y\nabla^1_XZ-\nabla^1_{[X,Y]}Z.
\end{equation}
The Ricci tensors of $(G_i,g)$ associated to the canonical connection and the Kobayashi-Nomizu connection
are defined by
\begin{equation}\rho^0(X,Y)=-g(R^0(X,e_1)Y,e_1)-g(R^0(X,e_2)Y,e_2)+g(R^0(X,e_3)Y,e_3),
\end{equation}
\begin{equation}\rho^1(X,Y)=-g(R^1(X,e_1)Y,e_1)-g(R^1(X,e_2)Y,e_2)+g(R^1(X,e_3)Y,e_3).
\end{equation}
Let
\begin{equation}\widetilde{\rho}^0(X,Y)=\frac{{\rho}^0(X,Y)+{\rho}^0(Y,X)}{2},
\end{equation}
and
\begin{equation}\widetilde{\rho}^1(X,Y)=\frac{{\rho}^1(X,Y)+{\rho}^1(Y,X)}{2}.
\end{equation}

Since $(L_Vg)(Y,Z):=g(\nabla_YV,Z)+g(Y,\nabla_ZV)$, we let
\begin{equation}
(L^j_Vg)(Y,Z):=g(\nabla^j_YV,Z)+g(Y,\nabla^j_ZV),
\end{equation}
 for $j=0,1$ and vector fields $V,Y,Z$.
\begin{defn}
$(G_i,g,J)$ is called the affine Ricci soliton associated to the connection $\nabla^0$ if it satisfies
\begin{equation}
(L^0_Vg)(Y,Z)+2\widetilde{\rho}^0(Y,Z)+2\lambda g(Y,Z)=0
\end{equation}
where $\lambda$ is a real number and $V=\lambda_1e_1+\lambda_2e_2+\lambda_3e_3$ and $\lambda_1,\lambda_2,\lambda_3$ are real numbers.
$(G_i,g,J)$ is called the affine Ricci soliton associated to the connection $\nabla^1$ if it satisfies
\begin{equation}
(L^1_Vg)(Y,Z)+2\widetilde{\rho}^1(Y,Z)+2\lambda g(Y,Z)=0
\end{equation}
\end{defn}
By (2.1) and Lemma 3.1 in \cite{BO}, we have for $G_1$, there exists a pseudo-orthonormal basis $\{e_1,e_2,e_3\}$ with $e_3$ timelike such that the Lie
algebra of $G_1$ satisfies
\begin{equation}
[e_1,e_2]=\alpha e_1-\beta e_3,~~[e_1,e_3]=-\alpha e_1-\beta e_2,~~[e_2,e_3]=\beta e_1+\alpha e_2+\alpha e_3,~~\alpha\neq 0.
\end{equation}
By (2.25) in \cite{W3}, we have
\begin{align}
&\widetilde{\rho}^0(e_1,e_1)=-\left(\alpha^2+\frac{\beta^2}{2}\right),~~\widetilde{\rho}^0(e_1,e_2)=0,\\\notag
&\widetilde{\rho}^0(e_1,e_3)=\frac{\alpha\beta}{4},~~\widetilde{\rho}^0(e_2,e_2)=-\left(\alpha^2+\frac{\beta^2}{2}\right),\\\notag
&\widetilde{\rho}^0(e_2,e_3)=\frac{\alpha^2}{2},~~\widetilde{\rho}^0(e_3,e_3)=0.
\end{align}
By Lemma 2.4 in \cite{W3} and (2.12), we have
\begin{align}
&(L^0_Vg)(e_1,e_1)=2\lambda_2\alpha,~~(L^0_Vg)(e_1,e_2)=-\lambda_1\alpha\\\notag
&(L^0_Vg)(e_1,e_3)=-\frac{\beta}{2}\lambda_2,~~(L^0_Vg)(e_2,e_2)=0,\\\notag
&(L^0_Vg)(e_2,e_3)=\frac{\beta}{2}\lambda_1,~~(L^0_Vg)(e_3,e_3)=0.
\end{align}
If $(G_1,g,J,V)$ is an affine Ricci soliton associated to the connection $\nabla^0$, then by (2.13), we have
\begin{align}
\left\{\begin{array}{l}
2\lambda_2\alpha-2\alpha^2-\beta^2+2\lambda=0,\\
\lambda_1\alpha=0,\\
-\beta\lambda_2+\alpha\beta=0,\\
-2\alpha^2-\beta^2+2\lambda=0,\\
\frac{\beta}{2}\lambda_1+\alpha^2=0,\\
\lambda=0.\\
\end{array}\right.
\end{align}
Solve (2.18), we have
\vskip 0.5 true cm
\begin{thm}
$(G_1,g,J,V)$ is not an affine Ricci soliton associated to the connection $\nabla^0$.
\end{thm}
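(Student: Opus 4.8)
The plan is to show that the system (2.18) of six equations in the four unknowns $\lambda, \lambda_1, \lambda_2, \lambda_3$ (with parameters $\alpha, \beta$ subject to $\alpha \neq 0$) is inconsistent. First I would read off the sixth equation, which immediately gives $\lambda = 0$. Substituting $\lambda = 0$ into the first and fourth equations yields $2\lambda_2\alpha - 2\alpha^2 - \beta^2 = 0$ and $-2\alpha^2 - \beta^2 = 0$; subtracting, we get $2\lambda_2\alpha = 0$, and since $\alpha \neq 0$ this forces $\lambda_2 = 0$. But then the fourth equation reads $2\alpha^2 + \beta^2 = 0$, which combined with $\alpha \neq 0$ (all quantities real) is already a contradiction.

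For completeness I would note that the second equation $\lambda_1 \alpha = 0$ forces $\lambda_1 = 0$ (again using $\alpha \neq 0$), and then the fifth equation becomes $\alpha^2 = 0$, i.e. $\alpha = 0$, contradicting the hypothesis $\alpha \neq 0$ directly; either route closes the argument. The conclusion is that no choice of $\lambda, \lambda_1, \lambda_2, \lambda_3$ satisfies (2.18), so $(G_1, g, J, V)$ cannot be an affine Ricci soliton associated to $\nabla^0$.

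I do not anticipate any genuine obstacle here: the argument is a short elimination in a linear-in-$\lambda_i$ system whose parameter constraint $\alpha \neq 0$ does all the work. The only thing to be careful about is bookkeeping — making sure each equation of (2.18) is used or at least shown to be incompatible with the others — and flagging explicitly where reality of $\alpha, \beta$ is invoked (so that $2\alpha^2 + \beta^2 = 0$ really does force $\alpha = 0$). I would present the proof as: derive $\lambda = 0$ from the last equation, then derive the contradiction from equations one, four (and optionally two, five), and conclude.
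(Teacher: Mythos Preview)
Your proposal is correct and follows exactly the paper's approach: the paper simply writes ``Solve (2.18)'' and states the theorem, and your elimination argument is the natural way to carry this out. One small efficiency: equations (4) and (6) alone already give $2\alpha^2+\beta^2=0$, contradicting $\alpha\neq 0$, so deriving $\lambda_2=0$ first is not needed (though harmless).
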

By (2.33) in \cite{W3}, we have
\begin{align}
&\widetilde{\rho}^1(e_1,e_1)=-\left(\alpha^2+{\beta^2}\right),~~\widetilde{\rho}^1(e_1,e_2)=\alpha\beta,\\\notag
&\widetilde{\rho}^1(e_1,e_3)=-\frac{\alpha\beta}{2},~~\widetilde{\rho}^1(e_2,e_2)=-\left(\alpha^2+\beta^2\right),\\\notag
&\widetilde{\rho}^1(e_2,e_3)=\frac{\alpha^2}{2},~~\widetilde{\rho}^1(e_3,e_3)=0.
\end{align}
By Lemma 2.8 in \cite{W3} and (2.12), we have
\begin{align}
&(L^1_Vg)(e_1,e_1)=2\lambda_2\alpha,~~(L^1_Vg)(e_1,e_2)=-\lambda_1\alpha,\\\notag
&(L^1_Vg)(e_1,e_3)=\lambda_1\alpha-\beta\lambda_2,~~(L^1_Vg)(e_2,e_2)=0,\\\notag
&(L^1_Vg)(e_2,e_3)=\beta\lambda_1-\alpha\lambda_2-\alpha\lambda_3,~~(L^1_Vg)(e_3,e_3)=0.
\end{align}
If $(G_1,g,J,V)$ is an affine Ricci soliton associated to the connection $\nabla^1$, then by (2.14), we have
\begin{align}
\left\{\begin{array}{l}
\lambda_2\alpha-\alpha^2-\beta^2+\lambda=0,\\
-\lambda_1\alpha+2\alpha\beta=0,\\
\lambda_1\alpha-\beta\lambda_2-\alpha\beta=0,\\
-\alpha^2-\beta^2+\lambda=0,\\
\beta\lambda_1-\alpha\lambda_2-\alpha\lambda_3+\alpha^2=0,\\
\lambda=0.\\
\end{array}\right.
\end{align}
Solve (2.21), we have
\vskip 0.5 true cm
\begin{thm}
$(G_1,g,J,V)$ is not an affine Ricci soliton associated to the connection $\nabla^1$.
\end{thm}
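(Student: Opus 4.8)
The plan is simply to show that the algebraic system (2.21) is inconsistent over the reals. First I would use the last equation, $\lambda=0$, to eliminate $\lambda$ from the remaining equations; in particular the fourth equation becomes $-\alpha^2-\beta^2=0$, i.e. $\alpha^2+\beta^2=0$. Since $\alpha,\beta$ are real, this forces $\alpha=\beta=0$. But the structure constants in (2.15) carry the standing hypothesis $\alpha\neq 0$, so this is a contradiction, and hence no choice of $(\lambda_1,\lambda_2,\lambda_3,\lambda)$ solves (2.21).

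An alternative route, should one wish to avoid invoking the constraint $\alpha\neq0$ directly, is to combine the first and fourth equations: subtracting them gives $\lambda_2\alpha=0$, hence $\lambda_2=0$ whenever $\alpha\neq0$; then the third equation reduces to $\lambda_1\alpha=\alpha\beta$, i.e. $\lambda_1=\beta$, while the second gives $\lambda_1=2\beta$, forcing $\beta=0$ and $\lambda_1=0$; feeding this back together with $\lambda=0$ into the fourth equation again yields $\alpha^2=0$, contradicting $\alpha\neq0$. Either way the conclusion is that (2.21) has no solution.

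There is essentially no hard step here: the only thing to be careful about is that the reduction uses the reality of the parameters (so that $\alpha^2+\beta^2=0$ genuinely forces both to vanish) and the hypothesis $\alpha\neq0$ from (2.15), which is what makes the system overdetermined. I would therefore present the argument in two lines — derive $\alpha^2+\beta^2=0$ from the fourth and sixth equations of (2.21), note the contradiction with $\alpha\neq 0$ — and conclude that $(G_1,g,J,V)$ cannot be an affine Ricci soliton associated to $\nabla^1$.
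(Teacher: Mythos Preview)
Your argument is correct and matches the paper's approach: the paper's ``proof'' is literally the phrase ``Solve (2.21), we have'' followed by the theorem, so you have simply supplied the details of that solution. Your first route---combining the fourth and sixth equations of (2.21) to obtain $\alpha^2+\beta^2=0$ and invoking $\alpha\neq 0$ from (2.15)---is the most efficient way to reach the contradiction, and your alternative route is essentially the same reduction the paper carries out explicitly in the analogous perturbed case (Theorem~3.3, proof).
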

By (2.2) and Lemma 3.5 in \cite{BO}, we have for $G_2$, there exists a pseudo-orthonormal basis $\{e_1,e_2,e_3\}$ with $e_3$ timelike such that the Lie
algebra of $G_2$ satisfies
\begin{equation}
[e_1,e_2]=\gamma e_2-\beta e_3,~~[e_1,e_3]=-\beta e_2-\gamma e_3,~~[e_2,e_3]=\alpha e_1,~~\gamma\neq 0.
\end{equation}
By (2.44) in \cite{W3}, we have for $(G_2,g,J,\nabla^0)$
\begin{align}
&\widetilde{\rho}^0(e_1,e_1)=-\left(\gamma^2+\frac{\alpha\beta}{2}\right),~~\widetilde{\rho}^0(e_1,e_2)=0,\\\notag
&\widetilde{\rho}^0(e_1,e_3)=0,~~\widetilde{\rho}^0(e_2,e_2)=-\left(\gamma^2+\frac{\alpha\beta}{2}\right),\\\notag
&\widetilde{\rho}^0(e_2,e_3)=\frac{\beta\gamma}{2}-\frac{\alpha\gamma}{4},~~\widetilde{\rho}^0(e_3,e_3)=0.
\end{align}
By Lemma 2.14 in \cite{W3} and (2.12), we have for $(G_2,g,J,\nabla^0,V)$
\begin{align}
&(L^0_Vg)(e_1,e_1)=0,~~(L^0_Vg)(e_1,e_2)=\lambda_2\gamma\\\notag
&(L^0_Vg)(e_1,e_3)=-\frac{\alpha}{2}\lambda_2,~~(L^0_Vg)(e_2,e_2)=-2\gamma\lambda_1,\\\notag
&(L^0_Vg)(e_2,e_3)=\frac{\alpha}{2}\lambda_1,~~(L^0_Vg)(e_3,e_3)=0.
\end{align}
If $(G_2,g,J,V)$ is an affine Ricci soliton associated to the connection $\nabla^0$, then by (2.13), we have
\begin{align}
\left\{\begin{array}{l}
-\left(\gamma^2+\frac{\alpha\beta}{2}\right)+\lambda=0,\\
\lambda_2\gamma=0,\\
\alpha\lambda_2=0,\\
-\gamma\lambda_1-\left(\gamma^2+\frac{\alpha\beta}{2}\right)+\lambda=0,\\
\frac{\alpha}{2}\lambda_1+2(\frac{\beta\gamma}{2}-\frac{\alpha\gamma}{4})=0,\\
\lambda=0.\\
\end{array}\right.
\end{align}
Solve (2.25), we have
\vskip 0.5 true cm
\begin{thm}
$(G_2,g,J,V)$ is not an affine Ricci soliton associated to the connection $\nabla^0$.
\end{thm}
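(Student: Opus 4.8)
The plan is to show that the system (2.25) has no solution, which is what Theorem 2.5 asserts. The argument is entirely a matter of elementary algebra on the six scalar equations, exactly parallel to what was done for $G_1$ in Theorems 2.1 and 2.3. First I would use the last equation $\lambda=0$ to eliminate $\lambda$ everywhere. Substituting $\lambda=0$ into the first equation gives $\gamma^2+\tfrac{\alpha\beta}{2}=0$; substituting into the fourth equation gives $\gamma\lambda_1+\gamma^2+\tfrac{\alpha\beta}{2}=0$. Subtracting, we obtain $\gamma\lambda_1=0$, and since $\gamma\neq 0$ by (2.22), this forces $\lambda_1=0$.

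Next I would look at the fifth equation, $\tfrac{\alpha}{2}\lambda_1+2\bigl(\tfrac{\beta\gamma}{2}-\tfrac{\alpha\gamma}{4}\bigr)=0$. With $\lambda_1=0$ already established, this reduces to $\beta\gamma-\tfrac{\alpha\gamma}{2}=0$, i.e. $\gamma\bigl(\beta-\tfrac{\alpha}{2}\bigr)=0$, so again using $\gamma\neq 0$ we get $\beta=\tfrac{\alpha}{2}$. Feeding $\beta=\tfrac{\alpha}{2}$ back into the relation $\gamma^2+\tfrac{\alpha\beta}{2}=0$ from the first equation yields $\gamma^2+\tfrac{\alpha^2}{4}=0$, that is, $\gamma^2=-\tfrac{\alpha^2}{4}$. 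Since $\gamma$ and $\alpha$ are real numbers, the left side is nonnegative and the right side nonpositive, so both must vanish; in particular $\gamma=0$, contradicting the standing hypothesis $\gamma\neq 0$ in (2.22).

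Therefore the system (2.25) is inconsistent, and $(G_2,g,J,V)$ cannot be an affine Ricci soliton associated to the connection $\nabla^0$. The only mild subtlety — the point where one must be a little careful rather than purely mechanical — is the final step, where one needs the reality of $\alpha,\gamma$ to convert $\gamma^2=-\alpha^2/4$ into a contradiction with $\gamma\neq 0$; there is no genuine obstacle here, just the observation that a sum of a square and a square cannot vanish unless each term does. All other manipulations are immediate consequences of $\gamma\neq 0$.
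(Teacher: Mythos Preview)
Your argument is correct and is exactly the elementary elimination the paper has in mind when it writes ``Solve (2.25)'': use $\lambda=0$ and $\gamma\neq 0$ to get $\lambda_1=0$, then the fifth equation forces $\beta=\alpha/2$, and the first equation becomes $\gamma^2+\alpha^2/4=0$, contradicting $\gamma\neq 0$. A minor point: in the paper's numbering (definitions and theorems share a counter) this is Theorem~2.4, not 2.5, and the $G_1$ results are Theorems~2.2 and~2.3; you may want to adjust those cross-references.
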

By (2.54) in \cite{W3}, we have for $(G_2,g,J,\nabla^1)$
\begin{align}
&\widetilde{\rho}^1(e_1,e_1)=-\left(\beta^2+\gamma^2\right),~~\widetilde{\rho}^1(e_1,e_2)=0,\\\notag
&\widetilde{\rho}^1(e_1,e_3)=0,~~\widetilde{\rho}^1(e_2,e_2)=-\left(\gamma^2+\alpha\beta\right),\\\notag
&\widetilde{\rho}^1(e_2,e_3)=-\frac{\alpha\gamma}{2},~~\widetilde{\rho}^1(e_3,e_3)=0.
\end{align}
By Lemma 2.18 in \cite{W3} and (2.12), we have for $(G_2,g,J,\nabla^1,V)$
\begin{align}
&(L^1_Vg)(e_1,e_1)=0,~~(L^1_Vg)(e_1,e_2)=\lambda_2\gamma,\\\notag
&(L^1_Vg)(e_1,e_3)=-\alpha\lambda_2+\gamma\lambda_3,~~(L^1_Vg)(e_2,e_2)=-2\gamma\lambda_1,\\\notag
&(L^1_Vg)(e_2,e_3)=\lambda_1\beta,~~(L^1_Vg)(e_3,e_3)=0.
\end{align}
If $(G_2,g,J,V)$ is an affine Ricci soliton associated to the connection $\nabla^1$, then by (2.14), we have
\begin{align}
\left\{\begin{array}{l}
-\beta^2-\gamma^2+\lambda=0,\\
\lambda_2\gamma=0,\\
-\alpha\lambda_2+\gamma\lambda_3=0,\\
-\gamma\lambda_1-\left(\gamma^2+\alpha\beta\right)+\lambda=0,\\
\lambda_1\beta-\alpha\gamma=0,\\
\lambda=0.\\
\end{array}\right.
\end{align}
Solve (2.28), we have
\vskip 0.5 true cm
\begin{thm}
$(G_2,g,J,V)$ is not an affine Ricci soliton associated to the connection $\nabla^1$.
\end{thm}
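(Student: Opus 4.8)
The plan is to show that the linear-plus-quadratic system (2.28) is inconsistent under the standing hypothesis $\gamma\neq 0$ coming from the structure equations (2.22) of $G_2$, so that no data $(\lambda,\lambda_1,\lambda_2,\lambda_3)$ can make $(G_2,g,J,V)$ an affine Ricci soliton for $\nabla^1$. The strategy is pure elimination: isolate $\lambda$, push it into the purely quadratic relation, and then use that $\beta,\gamma$ are real numbers.

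First I would read off from the sixth equation of (2.28) that $\lambda=0$. Substituting this into the first equation yields $\beta^2+\gamma^2=0$. Since $\beta$ and $\gamma$ are real, a sum of two squares can vanish only when both terms vanish, hence $\beta=\gamma=0$.

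This already contradicts $\gamma\neq 0$ in (2.22), which proves the theorem; the equations of (2.28) involving $\lambda_1,\lambda_2,\lambda_3$ — and the vanishing entries $(L^1_Vg)(e_1,e_1)$, $(L^1_Vg)(e_3,e_3)$ — need not even be examined. There is essentially no obstacle here: the only point to be careful about is to invoke both the reality of $\beta,\gamma$ and the hypothesis $\gamma\neq 0$; without the former the first equation would admit $\beta=\pm i\gamma$, and without the latter $\beta=\gamma=0$ would be admissible and one would have to look further down the list.
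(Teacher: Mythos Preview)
Your argument is correct and is precisely the intended one: the paper's proof consists solely of the instruction ``Solve (2.28)'', and the fastest route through (2.28) is exactly the one you give---from the sixth equation $\lambda=0$, then from the first $\beta^2+\gamma^2=0$, forcing $\gamma=0$ in contradiction to (2.22).
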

By (2.3) and Lemma 3.8 in \cite{BO}, we have for $G_3$, there exists a pseudo-orthonormal basis $\{e_1,e_2,e_3\}$ with $e_3$ timelike such that the Lie
algebra of $G_3$ satisfies
\begin{equation}
[e_1,e_2]=-\gamma e_3,~~[e_1,e_3]=-\beta e_2,~~[e_2,e_3]=\alpha e_1.
\end{equation}
By (2.64) in \cite{W3}, we have for $(G_3,g,J,\nabla^0)$
\begin{align}
&\widetilde{\rho}^0(e_1,e_1)=-\lambda a_3,~~\widetilde{\rho}^0(e_1,e_2)=0,\\\notag
&\widetilde{\rho}^0(e_1,e_3)=0,~~\widetilde{\rho}^0(e_2,e_2)=-\lambda a_3,\\\notag
&\widetilde{\rho}^0(e_2,e_3)=0,~~\widetilde{\rho}^0(e_3,e_3)=0,
\end{align}
where $a_3=\frac{1}{2}(\alpha+\beta-\gamma).$
By Lemma 2.24 in \cite{W3} and (2.12), we have for $(G_3,g,J,\nabla^0,V)$
\begin{align}
&(L^0_Vg)(e_1,e_1)=0,~~(L^0_Vg)(e_1,e_2)=0,\\\notag
&(L^0_Vg)(e_1,e_3)=-a_3\lambda_2,~~(L^0_Vg)(e_2,e_2)=0,\\\notag
&(L^0_Vg)(e_2,e_3)=a_3\lambda_1,~~(L^0_Vg)(e_3,e_3)=0.
\end{align}
If $(G_3,g,J,V)$ is an affine Ricci soliton associated to the connection $\nabla^0$, then by (2.13), we have
\begin{align}
\left\{\begin{array}{l}
\gamma a_3=0,\\
\lambda_2a_3=0,\\
\lambda_1 a_3=0,\\
\lambda=0.\\
\end{array}\right.
\end{align}
Solve (2.32), we have
\vskip 0.5 true cm
\begin{thm}
$(G_3,g,J,V)$ is an affine Ricci soliton associated to the connection $\nabla^0$ if and only if\\
(i) $\lambda=0$, $\alpha+\beta-\gamma=0$,\\
(ii)$\lambda=0$, $\alpha+\beta-\gamma\neq 0$, $\gamma=\lambda_1=\lambda_2=0$.
\end{thm}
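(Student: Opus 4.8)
The plan is to extract the defining equations of the affine Ricci soliton condition for $(G_3,g,J,V)$ with respect to $\nabla^0$ — that is, the system (2.32), obtained by feeding the data (2.30) and (2.31) into (2.13) of Definition 2.1 — and then to solve this system by a single case distinction on whether the quantity $a_3=\frac{1}{2}(\alpha+\beta-\gamma)$ vanishes. The last equation of (2.32) already forces $\lambda=0$ with no further hypothesis, so in every solution the soliton constant is zero and only the three equations $\gamma a_3=0$, $\lambda_2 a_3=0$, $\lambda_1 a_3=0$ remain to be analysed. Note also that $\lambda_3$ does not appear anywhere in (2.32), hence it will be a free parameter in both branches.

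First I would treat the case $a_3=0$, i.e. $\alpha+\beta-\gamma=0$: then the first three equations of (2.32) hold identically, and together with $\lambda=0$ this gives precisely alternative (i), with $\lambda_1,\lambda_2,\lambda_3$ arbitrary. Next I would treat the case $a_3\neq 0$, i.e. $\alpha+\beta-\gamma\neq 0$: dividing $\gamma a_3=0$, $\lambda_2 a_3=0$, $\lambda_1 a_3=0$ by $a_3$ yields $\gamma=0$ and $\lambda_1=\lambda_2=0$, which together with $\lambda=0$ is exactly alternative (ii). For the converse I would simply substitute each of (i) and (ii) back into (2.32): in case (i) the products $\gamma a_3$, $\lambda_1 a_3$, $\lambda_2 a_3$ all vanish because $a_3=0$, and in case (ii) they vanish because $\gamma=\lambda_1=\lambda_2=0$, while $\lambda=0$ is imposed in both; hence (2.13) holds on every pair of basis vectors and $(G_3,g,J,V)$ is an affine Ricci soliton.

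I do not anticipate a genuine obstacle here: once the structure constants of $G_3$ in (2.29) are fixed, (2.32) is a system of bilinear equations whose solution set is entirely governed by whether $a_3=0$, so the whole argument is a short two-case bookkeeping. The only point that deserves a line of care is the passage from (2.13) to (2.32) — checking that the six scalar equations coming from the pairs $(e_i,e_j)$ collapse to the four listed, with the $(e_1,e_1)$ and $(e_2,e_2)$ equations coinciding and the $(e_1,e_2)$ equation being vacuous — but this is immediate from (2.30), (2.31) and the sign convention in (2.2) (recall $g(e_3,e_3)=-1$, which is what produces the equation $\lambda=0$).
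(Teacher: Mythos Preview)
Your proposal is correct and follows exactly the approach of the paper, which simply writes ``Solve (2.32)'' and states the theorem; you have merely supplied the routine details of that solution by the obvious case split on $a_3$. Nothing more is needed.
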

By (2.69) in \cite{W3}, we have for $(G_3,g,J,\nabla^1)$
\begin{align}
&\widetilde{\rho}^1(e_1,e_1)=\lambda(a_1-a_3),~~\widetilde{\rho}^1(e_1,e_2)=0,\\\notag
&\widetilde{\rho}^1(e_1,e_3)=0,~~\widetilde{\rho}^1(e_2,e_2)=-\gamma(a_2+a_3),\\\notag
&\widetilde{\rho}^1(e_2,e_3)=0,~~\widetilde{\rho}^1(e_3,e_3)=0,
\end{align}
where $a_1=\frac{1}{2}(\alpha-\beta-\gamma).$
By Lemma 2.27 in \cite{W3} and (2.12), we have for $(G_3,g,J,\nabla^1,V)$
\begin{align}
&(L^1_Vg)(e_1,e_1)=0,~~(L^1_Vg)(e_1,e_2)=0,\\\notag
&(L^1_Vg)(e_1,e_3)=-(a_2+a_3)\lambda_2,~~(L^1_Vg)(e_2,e_2)=0,\\\notag
&(L^1_Vg)(e_2,e_3)=\lambda_1(a_3-a_1),~~(L^1_Vg)(e_3,e_3)=0.
\end{align}
If $(G_3,g,J,V)$ is an affine Ricci soliton associated to the connection $\nabla^1$, then by (2.14), we have
\begin{align}
\left\{\begin{array}{l}
\gamma(a_1-a_3)+\lambda=0,\\
(a_2+a_3)\lambda_2=0,\\
-\gamma(a_2+a_3)+\lambda=0,\\
\lambda_1(a_3-a_1)=0,\\
\lambda=0.\\
\end{array}\right.
\end{align}
Solve (2.35), we have
\vskip 0.5 true cm
\begin{thm}
$(G_3,g,J,V)$ is an affine Ricci soliton associated to the connection $\nabla^1$ if and only if\\
(i) $\lambda=0$, $\gamma\neq 0$, $\alpha=\beta=0$,\\
(ii)$\lambda=0$, $\gamma= 0$, $\alpha\lambda_2=0$, $\lambda_1\beta=0$.
\end{thm}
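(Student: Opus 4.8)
The plan is to reduce the statement to the algebraic system (2.35) and then solve it. That system is obtained by expanding the six independent components of the soliton equation (2.14) on $G_3$ using the Ricci data (2.33) and the Lie-derivative data (2.34): the $(e_1,e_2)$-component is identically zero, the $(e_3,e_3)$-component (with $g(e_3,e_3)=-1$) gives $\lambda=0$, and the remaining four components give the first four lines of (2.35). So the first step is to set $\lambda=0$, leaving
\[
\gamma(a_1-a_3)=0,\qquad (a_2+a_3)\lambda_2=0,\qquad \gamma(a_2+a_3)=0,\qquad \lambda_1(a_3-a_1)=0 .
\]

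The second step is to eliminate the auxiliary quantities $a_1,a_2,a_3$ in favour of the structure constants. From $a_3=\frac12(\alpha+\beta-\gamma)$ and $a_1=\frac12(\alpha-\beta-\gamma)$ one has $a_3-a_1=\beta$, and from the expression for $a_2$ recorded in \cite{W3} one has $a_2+a_3=\alpha$. Substituting, the four conditions collapse to
\[
\gamma\beta=0,\qquad \gamma\alpha=0,\qquad \alpha\lambda_2=0,\qquad \lambda_1\beta=0 .
\]

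The third step is the case analysis on $\gamma$. If $\gamma\neq0$, then $\gamma\beta=0$ and $\gamma\alpha=0$ force $\alpha=\beta=0$, after which $\alpha\lambda_2=0$ and $\lambda_1\beta=0$ hold automatically; this is case (i). If $\gamma=0$, the two conditions carrying a factor of $\gamma$ are vacuous and only $\alpha\lambda_2=0$ and $\lambda_1\beta=0$ remain; this is case (ii). The converse is immediate: substituting the constraints of (i) or of (ii) back into (2.33)--(2.34), one checks directly that every component of (2.14) vanishes.

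The argument is elementary once (2.35) is in hand, so I do not expect a genuine obstacle; the only delicate point is the bookkeeping of the relations $a_3-a_1=\beta$ and $a_2+a_3=\alpha$ (and the signs in (2.33)--(2.34)), since it is precisely $a_2+a_3=\alpha$ that yields the clause $\alpha\lambda_2=0$ in (ii) and forces $\alpha=0$ in (i). That identity is the step I would verify most carefully against \cite{W3}.
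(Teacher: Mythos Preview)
Your proposal is correct and follows exactly the approach of the paper: the paper simply writes ``Solve (2.35), we have'' before stating the theorem, and you have supplied precisely the elementary algebra that this phrase encodes, including the key identifications $a_3-a_1=\beta$ and $a_2+a_3=\alpha$ (the latter consistent with the analogous relation $b_2+b_3=\alpha$ recorded for $G_4$) and the case split on $\gamma$. There is no genuinely different route here, only the expansion of a step the paper left implicit.
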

By (2.4) and Lemma 3.11 in \cite{BO}, we have for $G_4$, there exists a pseudo-orthonormal basis $\{e_1,e_2,e_3\}$ with $e_3$ timelike such that the Lie
algebra of $G_4$ satisfies
\begin{align}
[e_1,e_2]=-e_2+(2\eta-\beta)e_3,~~\eta=1~{\rm or}-1,~~[e_1,e_3]=-\beta e_2+ e_3,~~[e_2,e_3]=\alpha e_1.
\end{align}
By (2.81) in \cite{W3}, we have for $(G_4,g,J,\nabla^0)$
\begin{align}
&\widetilde{\rho}^0(e_1,e_1)=(2\eta-\beta)b_3-1,~~\widetilde{\rho}^0(e_1,e_2)=0,\\\notag
&\widetilde{\rho}^0(e_1,e_3)=0,~~\widetilde{\rho}^0(e_2,e_2)=(2\eta-\beta)b_3-1,\\\notag
&\widetilde{\rho}^0(e_2,e_3)=\frac{b_3-\beta}{2},~~\widetilde{\rho}^0(e_3,e_3)=0,
\end{align}
where $b_3=\frac{\alpha}{2}+\eta.$
By Lemma 2.32 in \cite{W3} and (2.12), we have for $(G_4,g,J,\nabla^0,V)$
\begin{align}
&(L^0_Vg)(e_1,e_1)=0,~~(L^0_Vg)(e_1,e_2)=-\lambda_2,\\\notag
&(L^0_Vg)(e_1,e_3)=-b_3\lambda_2,~~(L^0_Vg)(e_2,e_2)=2\lambda_1,\\\notag
&(L^0_Vg)(e_2,e_3)=b_3\lambda_1,~~(L^0_Vg)(e_3,e_3)=0.
\end{align}
If $(G_4,g,J,V)$ is an affine Ricci soliton associated to the connection $\nabla^0$, then by (2.13), we have
\begin{align}
\left\{\begin{array}{l}
(2\eta-\beta)b_3-1+\lambda=0,\\
\lambda_2=0,\\
\lambda_1+(2\eta-\beta)b_3-1+\lambda=0,\\
\lambda_1b_3+b_3-\beta=0,\\
\lambda=0.\\
\end{array}\right.
\end{align}
Solve (2.39), we have
\vskip 0.5 true cm
\begin{thm}
$(G_4,g,J,V)$ is an affine Ricci soliton associated to the connection $\nabla^0$ if and only if
$\lambda=\lambda_1=\lambda_2=0$, $\alpha=0$, $\beta=\eta.$
\end{thm}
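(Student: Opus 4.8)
The plan is to solve the linear-and-quadratic system (2.39) by elimination, the decisive input being that $\eta^2=1$. First I would extract the trivial consequences: the fifth equation gives $\lambda=0$, the second gives $\lambda_2=0$, and subtracting the first equation from the third gives $\lambda_1=0$. Since $\lambda_3$ does not occur anywhere in (2.39), it remains a free parameter; this is why the statement only pins down $\lambda,\lambda_1,\lambda_2,\alpha,\beta$. So from now on it suffices to analyse the reduced system consisting of the first and fourth equations with $\lambda=\lambda_1=0$, namely
\begin{equation*}
(2\eta-\beta)b_3=1,\qquad b_3-\beta=0,
\end{equation*}
where $b_3=\tfrac{\alpha}{2}+\eta$.

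Next I would substitute $\beta=b_3$ from the second of these into the first, obtaining $(2\eta-b_3)b_3=1$. Writing out $b_3=\tfrac{\alpha}{2}+\eta$ gives $2\eta-b_3=\eta-\tfrac{\alpha}{2}$, so the left-hand side factors as a difference of squares:
\begin{equation*}
\Bigl(\eta-\tfrac{\alpha}{2}\Bigr)\Bigl(\eta+\tfrac{\alpha}{2}\Bigr)=\eta^2-\tfrac{\alpha^2}{4}=1-\tfrac{\alpha^2}{4}.
\end{equation*}
Setting this equal to $1$ forces $\alpha=0$, hence $b_3=\eta$ and therefore $\beta=b_3=\eta$. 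This proves the ``only if'' direction.

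For the converse I would simply plug $\lambda=\lambda_1=\lambda_2=0$, $\alpha=0$ (so that $b_3=\eta$) and $\beta=\eta$ into the five equations of (2.39) and check each holds: the first and third read $\eta\cdot\eta-1=0$, the fourth reads $\eta-\eta=0$, and the remaining two are immediate; here $\eta^2=1$ is used again. Since $\lambda_3$ is unconstrained, the pair $(G_4,g,J,V)$ with $V=\lambda_3 e_3$ is an affine Ricci soliton for $\nabla^0$ for every real $\lambda_3$, consistent with the statement. There is no serious obstacle in this argument; the only point worth care is noticing that the substitution $\beta=b_3$ turns $(2\eta-\beta)b_3$ into a difference of squares, which is exactly what lets the constraint $\eta^2=1$ eliminate $\alpha$, together with keeping track of the fact that $\lambda_3$ never enters (2.39).
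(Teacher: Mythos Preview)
Your argument is correct and follows exactly the approach of the paper, which simply writes ``Solve (2.39), we have'' and states the theorem without further detail; you have filled in precisely the elimination the paper leaves implicit, including the crucial observation that $\eta^2=1$ collapses $(2\eta-b_3)b_3$ to $1-\tfrac{\alpha^2}{4}$. Your remark that $\lambda_3$ is unconstrained is also accurate and worth making explicit, since the theorem statement is silent on it.
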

By (2.89) in \cite{W3}, we have for $(G_4,g,J,\nabla^1)$
\begin{align}
&\widetilde{\rho}^1(e_1,e_1)=-[1+(\beta-2\eta)(b_3-b_1)],~~\widetilde{\rho}^1(e_1,e_2)=0,\\\notag
&\widetilde{\rho}^1(e_1,e_3)=0,~~\widetilde{\rho}^1(e_2,e_2)=-[1+(\beta-2\eta)(b_2+b_3)],\\\notag
&\widetilde{\rho}^1(e_2,e_3)=\frac{\alpha+b_3-b_1-\beta}{2},~~\widetilde{\rho}^1(e_3,e_3)=0,
\end{align}
where $b_1=\frac{\alpha}{2}+\eta-\beta$, $b_2=\frac{\alpha}{2}-\eta.$
By Lemma 2.36 in \cite{W3} and (2.12), we have for $(G_4,g,J,\nabla^1,V)$
\begin{align}
&(L^1_Vg)(e_1,e_1)=0,~~(L^1_Vg)(e_1,e_2)=-\lambda_2,\\\notag
&(L^1_Vg)(e_1,e_3)=-(b_2+b_3)\lambda_2-\lambda_3,~~(L^1_Vg)(e_2,e_2)=2\lambda_1,\\\notag
&(L^1_Vg)(e_2,e_3)=\lambda_1(b_3-b_1),~~(L^1_Vg)(e_3,e_3)=0.
\end{align}
If $(G_4,g,J,V)$ is an affine Ricci soliton associated to the connection $\nabla^1$, then by (2.14), we have
\begin{align}
\left\{\begin{array}{l}
-[1+(\beta-2\eta)(b_3-b_1)]+\lambda=0,\\
\lambda_2=0,\\
-(b_2+b_3)\lambda_2-\lambda_3=0,\\
\lambda_1-[1+(\beta-2\eta)(b_2+b_3)]+\lambda=0,\\
\lambda_1(b_3-b_1)+(\alpha+b_3-b_1-\beta)=0,\\
\lambda=0.\\
\end{array}\right.
\end{align}
Solve (2.42), we have
\vskip 0.5 true cm
\begin{thm}
$(G_4,g,J,V)$ is not an affine Ricci soliton associated to the connection $\nabla^1$.
\end{thm}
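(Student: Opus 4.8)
The plan is to show that the system (2.42) is inconsistent. First I would record the consequences of the three ``easy'' equations: the last line forces $\lambda=0$, the second line forces $\lambda_2=0$, and then the third line forces $\lambda_3=0$. So the only unknowns left to constrain are $\lambda_1$ and the structure constants $\alpha,\beta$ (with $\eta=\pm1$ fixed).

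Next I would substitute the definitions $b_1=\frac{\alpha}{2}+\eta-\beta$, $b_2=\frac{\alpha}{2}-\eta$, $b_3=\frac{\alpha}{2}+\eta$ into the remaining equations. The point is that these combinations simplify:
\begin{equation}
b_3-b_1=\beta,\qquad b_2+b_3=\alpha .
\end{equation}
With $\lambda=0$, the first equation of (2.42) becomes $1+(\beta-2\eta)\beta=0$, i.e. $\beta^2-2\eta\beta+1=0$. Since $\eta^2=1$, this is exactly $(\beta-\eta)^2=0$, so necessarily $\beta=\eta$.

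Finally I would feed $\beta=\eta$ into the fourth and fifth equations. The fourth equation (with $\lambda=0$ and $b_2+b_3=\alpha$) gives $\lambda_1=1+(\beta-2\eta)\alpha=1-\eta\alpha$. The fifth equation (with $b_3-b_1=\beta=\eta$) reads $\lambda_1\eta+\alpha=0$, hence $\lambda_1=-\eta\alpha$ (using $1/\eta=\eta$). Comparing the two expressions yields $1-\eta\alpha=-\eta\alpha$, i.e. $1=0$, a contradiction. Therefore (2.42) has no solution and $(G_4,g,J,V)$ is not an affine Ricci soliton associated to $\nabla^1$.

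The computation is entirely routine; the only place requiring a small amount of care is the reduction $\beta^2-2\eta\beta+1=(\beta-\eta)^2$, which relies on $\eta^2=1$, and then noticing that the two remaining equations pin down $\lambda_1$ in two incompatible ways. There is no genuine obstacle here beyond keeping track of the substitutions $b_3-b_1=\beta$ and $b_2+b_3=\alpha$.
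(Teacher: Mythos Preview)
Your argument is correct and follows exactly the route the paper implicitly takes: the paper simply writes ``Solve (2.42), we have'' and states the theorem, so you have supplied the omitted algebra. The key simplifications $b_3-b_1=\beta$, $b_2+b_3=\alpha$, the factoring $\beta^2-2\eta\beta+1=(\beta-\eta)^2$, and the resulting incompatibility $1-\eta\alpha=-\eta\alpha$ are all accurate.
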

By (2.5) and Lemma 4.1 in \cite{BO}, we have for $G_5$, there exists a pseudo-orthonormal basis $\{e_1,e_2,e_3\}$ with $e_3$ timelike such that the Lie
algebra of $G_5$ satisfies
\begin{equation}
[e_1,e_2]=0,~~[e_1,e_3]=\alpha e_1+\beta e_2,~~[e_2,e_3]=\gamma e_1+\delta e_2,~~\alpha+\delta\neq 0,~~\alpha\gamma+\beta\delta=0.
\end{equation}
By (3.5) in \cite{W3}, we have for $(G_5,g,J,\nabla^0)$,
$\widetilde{\rho}^0(e_i,e_j)=0$, for $1\leq i,j\leq 3$.
By Lemma 3.3 in \cite{W3} and (2.12), we have for $(G_5,g,J,\nabla^0,V)$
\begin{align}
&(L^0_Vg)(e_1,e_1)=0,~~(L^0_Vg)(e_1,e_2)=0,\\\notag
&(L^0_Vg)(e_1,e_3)=\frac{\beta-\gamma}{2}\lambda_2,~~(L^0_Vg)(e_2,e_2)=0,\\\notag
&(L^0_Vg)(e_2,e_3)=-\frac{\beta-\gamma}{2}\lambda_1,~~(L^0_Vg)(e_3,e_3)=0.
\end{align}
If $(G_5,g,J,V)$ is an affine Ricci soliton associated to the connection $\nabla^0$, then by (2.13), we have
\begin{align}
\left\{\begin{array}{l}
\lambda=0,\\
(\beta-\gamma)\lambda_2=0,\\
(\beta-\gamma)\lambda_1=0,\\
\end{array}\right.
\end{align}
Solve (2.45), we have
\vskip 0.5 true cm
\begin{thm}
$(G_5,g,J,V)$ is an affine Ricci soliton associated to the connection $\nabla^0$ if and only if\\
(i)$\lambda=\beta=\gamma=0$, $\alpha+\delta\neq 0$.\\
(ii)$\lambda=0$, $\beta\neq \gamma$, $\lambda_1=\lambda_2=0$, $\alpha+\delta\neq 0$, $\alpha\gamma+\beta\delta=0$.
\end{thm}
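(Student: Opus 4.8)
The plan is to solve the system (2.45) directly, keeping in mind the standing structure relations $\alpha+\delta\neq 0$ and $\alpha\gamma+\beta\delta=0$ coming from (2.43). Since $\widetilde{\rho}^0$ vanishes identically on $G_5$ and, by (2.44), the only potentially nonzero components of $L^0_Vg$ are the $(e_1,e_3)$ and $(e_2,e_3)$ entries, the soliton equation (2.13) is equivalent to the three scalar conditions $\lambda=0$, $(\beta-\gamma)\lambda_2=0$ and $(\beta-\gamma)\lambda_1=0$. The first of these forces $\lambda=0$ unconditionally, so the whole problem reduces to the pair of equations $(\beta-\gamma)\lambda_1=(\beta-\gamma)\lambda_2=0$.

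I would then split into two cases according to whether $\beta-\gamma$ vanishes. If $\beta=\gamma$, substituting into $\alpha\gamma+\beta\delta=0$ gives $\beta(\alpha+\delta)=0$, and since $\alpha+\delta\neq 0$ this forces $\beta=\gamma=0$; here $\lambda_1,\lambda_2,\lambda_3$ remain arbitrary, which is alternative (i). If instead $\beta\neq\gamma$, the two remaining equations immediately give $\lambda_1=\lambda_2=0$ while $\lambda_3$ stays free, and the structure conditions $\alpha+\delta\neq 0$ and $\alpha\gamma+\beta\delta=0$ are unaffected; this is alternative (ii).

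For the converse it suffices to verify that each family satisfies (2.13). In both cases $\lambda=0$ and $\widetilde{\rho}^0=0$, so we only need $L^0_Vg\equiv 0$: in case (i) the coefficient $\beta-\gamma$ vanishes, while in case (ii) we have $\lambda_1=\lambda_2=0$, so in either situation the $(e_1,e_3)$ and $(e_2,e_3)$ components of (2.44) are zero and all the remaining components are already zero.

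I do not expect any real obstacle: the system is essentially linear once the single quadratic constraint $\alpha\gamma+\beta\delta=0$ has been recorded, and the only point demanding care is the observation that $\beta=\gamma$ together with $\alpha+\delta\neq 0$ forces $\beta=\gamma=0$, so that the two alternatives are genuinely disjoint while jointly covering every solution.
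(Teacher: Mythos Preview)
Your argument is correct and follows exactly the approach of the paper, which simply states ``Solve (2.45), we have'' before recording the two alternatives. You have merely supplied the details the paper omits, including the clean observation that $\beta=\gamma$ combined with $\alpha\gamma+\beta\delta=0$ and $\alpha+\delta\neq 0$ forces $\beta=\gamma=0$, which explains why alternative~(i) is stated with $\beta=\gamma=0$ rather than just $\beta=\gamma$.
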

By Lemma 3.7 in \cite{W3}, we have for $(G_5,g,J,\nabla^1)$, $\widetilde{\rho}^1(e_i,e_j)=0$, for $1\leq i,j\leq 3$.
By Lemma 3.6 in \cite{W3} and (2.12), we have for $(G_5,g,J,\nabla^1,V)$
\begin{align}
&(L^1_Vg)(e_1,e_1)=0,~~(L^1_Vg)(e_1,e_2)=0,\\\notag
&(L^1_Vg)(e_1,e_3)=-\alpha\lambda_1-\gamma\lambda_2,~~(L^1_Vg)(e_2,e_2)=0,\\\notag
&(L^1_Vg)(e_2,e_3)=-\beta\lambda_1-\delta\lambda_2,~~(L^1_Vg)(e_3,e_3)=0.
\end{align}
If $(G_5,g,J,V)$ is an affine Ricci soliton associated to the connection $\nabla^1$, then by (2.14), we have
\begin{align}
\left\{\begin{array}{l}
\lambda=0,\\
\alpha\lambda_1+\gamma\lambda_2=0,\\
\beta\lambda_1+\delta\lambda_2=0.\\
\end{array}\right.
\end{align}
\noindent Solve (2.47), we have
\vskip 0.5 true cm
\begin{thm}
$(G_5,g,J,V)$ is an affine Ricci soliton associated to the connection $\nabla^1$ if and only if\\
(i)$\lambda=\lambda_1=\lambda_2=0,$\\
(ii)$\lambda=0$, $\lambda_1\neq 0$, $\lambda_2=0$, $\alpha=\beta=0$, $\delta\neq 0$,\\
(iii)$\lambda=0$, $\lambda_1= 0$, $\lambda_2\neq 0$, $\delta=\gamma=0$, $\alpha\neq 0$.\\
\end{thm}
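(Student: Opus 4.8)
The plan is to read off everything from the linear system (2.47), while carrying along the two structural constraints $\alpha+\delta\neq 0$ and $\alpha\gamma+\beta\delta=0$ that come with the Lie algebra (2.43). The first line of (2.47) gives $\lambda=0$ with no work, so the whole classification reduces to describing the solutions $(\lambda_1,\lambda_2)$ of the homogeneous system $\alpha\lambda_1+\gamma\lambda_2=0$, $\beta\lambda_1+\delta\lambda_2=0$, whose coefficient matrix is $\left(\begin{smallmatrix}\alpha & \gamma\\ \beta & \delta\end{smallmatrix}\right)$, subject to those constraints. I would organize the argument by how many of $\lambda_1,\lambda_2$ vanish.

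First, if $\lambda_1=\lambda_2=0$ the last two equations of (2.47) hold automatically, for every admissible choice of structure constants; this is case (i), and it imposes nothing beyond $\lambda=0$. Next, if $\lambda_1\neq0$ and $\lambda_2=0$, the two equations become $\alpha\lambda_1=0$ and $\beta\lambda_1=0$, forcing $\alpha=\beta=0$; then $\alpha+\delta\neq0$ forces $\delta\neq0$, the relation $\alpha\gamma+\beta\delta=0$ is automatic, and $\gamma$ remains free — this is exactly case (ii). The situation $\lambda_1=0$, $\lambda_2\neq0$ is symmetric: the equations read $\gamma\lambda_2=0$, $\delta\lambda_2=0$, so $\gamma=\delta=0$, hence $\alpha\neq0$ by the constraint, giving case (iii) with $\beta$ free.

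The one case that genuinely uses the hidden relation $\alpha\gamma+\beta\delta=0$ is $\lambda_1\neq0$ and $\lambda_2\neq0$; I expect this to be the only step requiring a moment's care. Here I would solve the two equations for $\alpha$ and $\beta$, namely $\alpha=-(\lambda_2/\lambda_1)\gamma$ and $\beta=-(\lambda_2/\lambda_1)\delta$, substitute into $\alpha\gamma+\beta\delta=0$, and obtain $(\lambda_2/\lambda_1)(\gamma^2+\delta^2)=0$; since $\lambda_1,\lambda_2$ are nonzero reals this yields $\gamma=\delta=0$, hence also $\alpha=\beta=0$, contradicting $\alpha+\delta\neq0$. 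So this case is empty, and (i)–(iii) exhaust all possibilities. To finish, I would note the converse is immediate: substituting each of (i), (ii), (iii) back into (2.47), and checking compatibility with (2.43), shows each is indeed an affine Ricci soliton for $\nabla^1$.
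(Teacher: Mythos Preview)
Your proposal is correct. The paper itself gives no argument beyond ``Solve (2.47), we have'' and then states the theorem, so your case split on $(\lambda_1,\lambda_2)$ is exactly the routine verification the paper leaves to the reader; in particular, your handling of the $\lambda_1\neq0,\ \lambda_2\neq0$ case via the structural relation $\alpha\gamma+\beta\delta=0$ to force $\gamma=\delta=0$ and reach a contradiction with $\alpha+\delta\neq0$ is the one nontrivial point, and it is done correctly.
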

\indent By (2.6) and Lemma 4.3 in \cite{BO}, we have for $G_6$, there exists a pseudo-orthonormal basis $\{e_1,e_2,e_3\}$ with $e_3$ timelike such that the Lie
algebra of $G_6$ satisfies
\begin{equation}
[e_1,e_2]=\alpha e_2+\beta e_3,~~[e_1,e_3]=\gamma e_2+\delta e_3,~~[e_2,e_3]=0,~~\alpha+\delta\neq 0£¬~~\alpha\gamma-\beta\delta=0.
\end{equation}
\noindent By (3.18) in \cite{W3}, we have for $(G_6,g,J,\nabla^0)$
\begin{align}
&\widetilde{\rho}^0(e_1,e_1)=\frac{1}{2}\beta(\beta-\gamma)-\alpha^2,~~\widetilde{\rho}^0(e_1,e_2)=0,\\\notag
&\widetilde{\rho}^0(e_1,e_3)=0,~~\widetilde{\rho}^0(e_2,e_2)=\frac{1}{2}\beta(\beta-\gamma)-\alpha^2,\\\notag
&\widetilde{\rho}^0(e_2,e_3)=\frac{1}{2}[-\gamma\alpha+\frac{1}{2}\delta(\beta-\gamma)],~~\widetilde{\rho}^0(e_3,e_3)=0.
\end{align}
By Lemma 3.11 in \cite{W3} and (2.12), we have for $(G_6,g,J,\nabla^0,V)$
\begin{align}
&(L^0_Vg)(e_1,e_1)=0,~~(L^0_Vg)(e_1,e_2)=\alpha\lambda_2,\\\notag
&(L^0_Vg)(e_1,e_3)=\frac{\gamma-\beta}{2}\lambda_2,~~(L^0_Vg)(e_2,e_2)=-2\alpha\lambda_1,\\\notag
&(L^0_Vg)(e_2,e_3)=\frac{\beta-\gamma}{2}\lambda_1,~~(L^0_Vg)(e_3,e_3)=0.
\end{align}
If $(G_6,g,J,V)$ is an affine Ricci soliton associated to the connection $\nabla^0$, then by (2.13), we have
\begin{align}
\left\{\begin{array}{l}
\frac{1}{2}\beta(\beta-\gamma)-\alpha^2+\lambda=0,\\
\alpha\lambda_2=0,\\
({\gamma-\beta})\lambda_2=0,\\
-\alpha\lambda_1+\frac{1}{2}\beta(\beta-\gamma)-\alpha^2+\lambda=0,\\
\frac{\beta-\gamma}{2}\lambda_1-\gamma\alpha+\frac{1}{2}\delta(\beta-\gamma)=0,\\
\lambda=0.\\
\end{array}\right.
\end{align}
Solve (2.51), we have
\vskip 0.5 true cm
\begin{thm}
$(G_6,g,J,V)$ is an affine Ricci soliton associated to the connection $\nabla^0$ if and only if\\
(i)$\lambda=\lambda_1=\lambda_2=\gamma=\delta=0$, $\alpha\neq 0$, $\alpha^2=\frac{1}{2}\beta^2$,\\
(ii)$\lambda=\lambda_1=\lambda_2=\alpha=\beta=\gamma=0$, $\delta\neq 0$,\\
(iii)$\lambda=\lambda_2=0$, $\lambda_1\neq 0$, $\alpha=\beta=\gamma=0$, $\delta\neq 0$,\\
(iv)$\lambda=\lambda_2=0$, $\lambda_1\neq 0$, $\alpha=\beta=0$, $\delta\neq 0$, $\gamma\neq 0$, $\lambda_1=-\delta$,\\
(v)$\lambda=\alpha=\beta=\gamma=0$, $\lambda_2\neq 0$, $\delta\neq 0$.
\end{thm}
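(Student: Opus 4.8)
The plan is to solve the mixed linear/quadratic system (2.51) for the soliton data $\lambda,\lambda_1,\lambda_2$ in terms of the structure constants $\alpha,\beta,\gamma,\delta$, keeping the two structural relations $\alpha+\delta\neq0$ and $\alpha\gamma-\beta\delta=0$ in force at every step. The last equation of (2.51) gives $\lambda=0$ at once, and subtracting its first equation from its fourth removes the common block $\tfrac{1}{2}\beta(\beta-\gamma)-\alpha^2+\lambda$ and leaves $\alpha\lambda_1=0$. So the natural branching is on whether $\alpha=0$.

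First I would handle $\alpha=0$. The first equation then becomes $\tfrac{1}{2}\beta(\beta-\gamma)=0$, while the constraint $\alpha\gamma-\beta\delta=0$ reduces to $\beta\delta=0$; since $\alpha+\delta\neq0$ forces $\delta\neq0$, this yields $\beta=0$. With $\alpha=\beta=0$ the first, second and fourth equations of (2.51) hold identically, the third becomes $\gamma\lambda_2=0$, and the fifth becomes $\gamma(\lambda_1+\delta)=0$. A sub-split on $\gamma$ finishes this case: if $\gamma=0$ then $\lambda_1,\lambda_2$ are unconstrained, and recording separately whether $\lambda_1=\lambda_2=0$, $\lambda_1\neq0=\lambda_2$, or $\lambda_2\neq0$ produces items (ii), (iii) and (v); if $\gamma\neq0$ then $\lambda_2=0$ and $\lambda_1=-\delta\neq0$, which is item (iv).

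Next I would handle $\alpha\neq0$. Then $\lambda_1=0$ from the relation just derived and $\lambda_2=0$ from the second equation of (2.51), so the third equation is automatic. The fifth equation becomes $-\gamma\alpha+\tfrac{1}{2}\delta(\beta-\gamma)=0$; substituting $\alpha\gamma=\beta\delta$ from the structural constraint collapses this to $\gamma(\alpha+\delta)=0$, whence $\gamma=0$ and $\beta\delta=\alpha\gamma=0$. If $\delta=0$, the first equation reads $\alpha^2=\tfrac{1}{2}\beta^2$, which is item (i). If instead $\delta\neq0$, then $\beta=0$ and the first equation forces $\alpha=0$, contradicting $\alpha\neq0$; so the branch $\alpha\neq0$, $\delta\neq0$ is vacuous. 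Collecting the surviving branches from both cases gives exactly (i)--(v), and conversely each of (i)--(v) is readily checked to satisfy (2.51) together with the structural relations.

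The elimination itself is routine; the part that needs care is the bookkeeping across the nested case splits --- keeping track of which of the relations $\alpha+\delta\neq0$ and $\alpha\gamma-\beta\delta=0$ is still binding after each substitution, making sure every sign pattern of $\alpha,\gamma,\delta$ and of $\lambda_1,\lambda_2$ is accounted for, and noticing that the $\alpha\neq0$, $\delta\neq0$ branch collapses so that it does not spuriously enlarge the list. I expect that this accounting, rather than any single computation, will be the main obstacle.
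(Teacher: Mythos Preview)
Your proposal is correct and follows exactly the approach implicit in the paper, which simply writes ``Solve (2.51), we have'' and then states the theorem. You have carried out that solution in full detail: the immediate reduction $\lambda=0$, the subtraction yielding $\alpha\lambda_1=0$, the dichotomy on $\alpha$, the use of $\alpha\gamma=\beta\delta$ and $\alpha+\delta\neq0$ to force $\beta=0$ (when $\alpha=0$) or $\gamma=0$ (when $\alpha\neq0$), and the final sub-splits that partition the solution set into precisely (i)--(v) --- all of this is accurate and complete.
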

By (3.23) in \cite{W3}, we have for $(G_6,g,J,\nabla^1)$
\begin{align}
&\widetilde{\rho}^1(e_1,e_1)=-(\alpha^2+\beta\gamma),~~\widetilde{\rho}^1(e_1,e_2)=0,\\\notag
&\widetilde{\rho}^1(e_1,e_3)=0,~~\widetilde{\rho}^1(e_2,e_2)=-\alpha^2,\\\notag
&\widetilde{\rho}^1(e_2,e_3)=0,~~\widetilde{\rho}^1(e_3,e_3)=0.
\end{align}
By Lemma 3.15 in \cite{W3} and (2.12), we have for $(G_6,g,J,\nabla^1,V)$
\begin{align}
&(L^1_Vg)(e_1,e_1)=0,~~(L^1_Vg)(e_1,e_2)=\lambda_2\alpha,\\\notag
&(L^1_Vg)(e_1,e_3)=-\delta\lambda_3,~~(L^1_Vg)(e_2,e_2)=-2\alpha\lambda_1,\\\notag
&(L^1_Vg)(e_2,e_3)=-\gamma\lambda_1,~~(L^1_Vg)(e_3,e_3)=0.
\end{align}
If $(G_6,g,J,V)$ is an affine Ricci soliton associated to the connection $\nabla^1$, then by (2.14), we have
\begin{align}
\left\{\begin{array}{l}
-(\alpha^2+\beta\gamma)+\lambda=0,\\
\lambda_2\alpha=0,\\
\delta\lambda_3=0,\\
-\alpha\lambda_1-\alpha^2+\lambda=0,\\
\gamma\lambda_1=0,\\
\lambda=0.\\
\end{array}\right.
\end{align}
Solve (2.54), we have
\vskip 0.5 true cm
\begin{thm}
$(G_6,g,J,V)$ is an affine Ricci soliton associated to the connection $\nabla^1$ if and only if\\
(i)$\lambda=\alpha=\beta=\lambda_1=\lambda_3=0$, $\delta\neq 0$,\\
(ii)$\lambda=\alpha=\beta=\gamma=\lambda_3=0$, $\delta\neq 0$, $\lambda_1\neq 0$.
\end{thm}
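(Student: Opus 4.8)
The statement is obtained simply by solving the system (2.54) together with the two structural constraints $\alpha+\delta\neq 0$ and $\alpha\gamma-\beta\delta=0$ carried over from (2.48). First I would read off $\lambda=0$ from the last equation of (2.54). Substituting this, the first and fourth equations become $\alpha^{2}+\beta\gamma=0$ and $\alpha(\alpha+\lambda_{1})=0$, while the remaining equations are $\lambda_{2}\alpha=0$, $\delta\lambda_{3}=0$ and $\gamma\lambda_{1}=0$.

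Next I would split the analysis according to whether $\alpha$ vanishes. If $\alpha\neq 0$, then $\alpha(\alpha+\lambda_{1})=0$ forces $\lambda_{1}=-\alpha$, and then $\gamma\lambda_{1}=0$ forces $\gamma=0$; plugging $\gamma=0$ into $\alpha^{2}+\beta\gamma=0$ yields $\alpha^{2}=0$, contradicting $\alpha\neq 0$. Hence necessarily $\alpha=0$. Then $\alpha+\delta\neq 0$ gives $\delta\neq 0$, so $\delta\lambda_{3}=0$ yields $\lambda_{3}=0$; moreover $\alpha\gamma-\beta\delta=0$ reduces to $\beta\delta=0$, hence $\beta=0$, and now $\alpha^{2}+\beta\gamma=0$ and $\lambda_{2}\alpha=0$ hold automatically.

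At this stage $\alpha=\beta=\lambda_{3}=\lambda=0$ and $\delta\neq 0$ are fixed, and the only surviving condition is $\gamma\lambda_{1}=0$. Either $\lambda_{1}=0$, with $\gamma$ left free, which is case (i); or $\lambda_{1}\neq 0$, which forces $\gamma=0$, which is case (ii). For the converse I would simply verify that each of the two parameter families satisfies every equation of (2.54), which is immediate since in both cases $\alpha=\beta=\lambda=\lambda_{3}=0$ and $\gamma\lambda_{1}=0$. The whole argument is elementary linear/polynomial bookkeeping; the only point that genuinely needs attention is keeping the constraints $\alpha+\delta\neq 0$ and $\alpha\gamma-\beta\delta=0$ from (2.48) active throughout, as these are exactly what rules out the branch $\alpha\neq 0$ and forces $\beta=0$ and $\delta\neq 0$.
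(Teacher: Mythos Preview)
Your proof is correct and follows exactly the approach of the paper, which simply states ``Solve (2.54), we have'' without spelling out the case analysis. Your detailed split on whether $\alpha$ vanishes, together with the use of the structural constraints $\alpha+\delta\neq 0$ and $\alpha\gamma-\beta\delta=0$ from (2.48), is precisely the elementary bookkeeping the paper leaves to the reader.
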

By (2.7) and Lemma 4.5 in \cite{BO}, we have for $G_7$, there exists a pseudo-orthonormal basis $\{e_1,e_2,e_3\}$ with $e_3$ timelike such that the Lie
algebra of $G_7$ satisfies
\begin{equation}
[e_1,e_2]=-\alpha e_1-\beta e_2-\beta e_3,~~[e_1,e_3]=\alpha e_1+\beta e_2+\beta e_3,~~[e_2,e_3]=\gamma e_1+\delta e_2+\delta e_3,,~~\alpha+\delta\neq 0,~~\alpha\gamma=0.
\end{equation}
\noindent By (3.34) in \cite{W3}, we have for $(G_7,g,J,\nabla^0)$
\begin{align}
&\widetilde{\rho}^0(e_1,e_1)=-(\alpha^2+\frac{\beta\gamma}{2}),~~\widetilde{\rho}^0(e_1,e_2)=0,\\\notag
&\widetilde{\rho}^0(e_1,e_3)=-\frac{1}{2}(\gamma\alpha+\frac{\delta\gamma}{2}),~~\widetilde{\rho}^0(e_2,e_2)=
-(\alpha^2+\frac{\beta\gamma}{2}),\\\notag
&\widetilde{\rho}^0(e_2,e_3)=\frac{1}{2}(\alpha^2+\frac{\beta\gamma}{2}),~~\widetilde{\rho}^0(e_3,e_3)=0.
\end{align}
\noindent By Lemma 3.20 in \cite{W3} and (2.12), we have for $(G_7,g,J,\nabla^0,V)$
\begin{align}
&(L^0_Vg)(e_1,e_1)=-2\alpha\lambda_2,~~(L^0_Vg)(e_1,e_2)=\alpha\lambda_1-\beta\lambda_2,\\\notag
&(L^0_Vg)(e_1,e_3)=(\beta-\frac{\gamma}{2})\lambda_2,~~(L^0_Vg)(e_2,e_2)=2\beta\lambda_1,\\\notag
&(L^0_Vg)(e_2,e_3)=(\frac{\gamma}{2}-\beta)\lambda_1,~~(L^0_Vg)(e_3,e_3)=0.
\end{align}
If $(G_7,g,J,V)$ is an affine Ricci soliton associated to the connection $\nabla^0$, then by (2.13), we have
\begin{align}
\left\{\begin{array}{l}
-\alpha\lambda_2-(\alpha^2+\frac{\beta\gamma}{2})+\lambda=0,\\
\alpha\lambda_1-\beta\lambda_2=0,\\
(\beta-\frac{\gamma}{2})\lambda_2-(\gamma\alpha+\frac{\delta\gamma}{2})=0,\\
\beta\lambda_1-(\alpha^2+\frac{\beta\gamma}{2})+\lambda=0,\\
(\frac{\gamma}{2}-\beta)\lambda_1+\alpha^2+\frac{\beta\gamma}{2}=0,\\
\lambda=0.\\
\end{array}\right.
\end{align}
Solve (2.58), we have
\vskip 0.5 true cm
\begin{thm}
$(G_7,g,J,V)$ is an affine Ricci soliton associated to the connection $\nabla^0$ if and only if\\
(i)$\lambda=\alpha=\beta=\gamma=0$, $\delta\neq 0$,\\
(ii)$\lambda=\alpha=\beta=0$, $\gamma\neq 0$, $\lambda_1=0$, $\lambda_2=-\delta$, $\delta\neq 0$,\\
(iii)$\lambda=\alpha=\gamma=\lambda_1=\lambda_2=0$, $\beta\neq 0$.
\end{thm}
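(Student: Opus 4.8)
The plan is to solve the polynomial system (2.58) outright, using the two structural constraints $\alpha+\delta\neq 0$ and $\alpha\gamma=0$ carried over from (2.55). The sixth equation already forces $\lambda=0$, so I would substitute this into the other five at the start; note too that $\lambda_3$ does not appear anywhere in (2.58), so it remains a free parameter in every branch. Adding the fourth and fifth equations cancels the common term $\alpha^2+\frac{\beta\gamma}{2}$ and leaves $\frac{\gamma}{2}\lambda_1=0$, i.e.\ $\gamma\lambda_1=0$; this, together with $\alpha\gamma=0$, is what makes the case analysis finite.

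I would then split on $\alpha\gamma=0$. \emph{Case $\gamma=0$.} Here the third equation reduces to $\beta\lambda_2=0$, the first to $\alpha(\lambda_2+\alpha)=0$, the fourth and fifth both to $\beta\lambda_1=\alpha^2$, and the second to $\alpha\lambda_1=\beta\lambda_2$. If $\alpha\neq 0$ then $\lambda_2=-\alpha$, so $\beta\lambda_2=0$ gives $\beta=0$, whence $\beta\lambda_1=\alpha^2$ gives $\alpha=0$, a contradiction; hence $\alpha=0$, which forces $\delta\neq 0$ and reduces the whole system to $\beta\lambda_1=\beta\lambda_2=0$. If moreover $\beta=0$ the system holds with $\lambda_1,\lambda_2$ arbitrary, which is case (i); if $\beta\neq 0$ it forces $\lambda_1=\lambda_2=0$, which is case (iii).

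\emph{Case $\alpha=0$, $\gamma\neq 0$.} Now $\gamma\lambda_1=0$ gives $\lambda_1=0$; the first equation becomes $\frac{\beta\gamma}{2}=0$, hence $\beta=0$; the third becomes $-\frac{\gamma}{2}(\lambda_2+\delta)=0$, hence $\lambda_2=-\delta$; and $\alpha+\delta\neq 0$ forces $\delta\neq 0$ (so in particular $\lambda_2\neq 0$). The remaining equations are then satisfied identically, which is exactly case (ii). Finally I would verify the converse by substituting each of the three families back into (2.58).

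I do not anticipate a genuine obstacle: the computation is essentially linear once $\lambda=0$ is used and the identity $\gamma\lambda_1=0$ is extracted. The one place needing care is the $\gamma=0$ branch, where one must not overlook the split between $\beta=0$ and $\beta\neq 0$ — this is precisely what separates case (i) from case (iii) — and one must keep track of the fact that $\alpha=0$ silently forces $\delta\neq 0$ in each surviving branch.
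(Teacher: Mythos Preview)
Your proposal is correct and follows the same approach as the paper, which simply states ``Solve (2.58)'' without supplying details. Your extraction of $\gamma\lambda_1=0$ by adding the fourth and fifth equations is a clean way to organize the case analysis, and every branch is handled accurately, including the contradiction ruling out $\alpha\neq 0$ when $\gamma=0$.
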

\vskip 0.5 true cm
By (3.42) in \cite{W3}, we have for $(G_7,g,J,\nabla^1)$
\begin{align}
&\widetilde{\rho}^1(e_1,e_1)=-\alpha^2,~~\widetilde{\rho}^1(e_1,e_2)=\frac{1}{2}(\beta\delta-\alpha\beta),\\\notag
&\widetilde{\rho}^1(e_1,e_3)=\beta(\alpha+\delta),~~\widetilde{\rho}^1(e_2,e_2)=-(\alpha^2+\beta^2+\beta\gamma),\\\notag
&\widetilde{\rho}^1(e_2,e_3)=\frac{1}{2}(\beta\gamma+\alpha\delta+2\delta^2),~~\widetilde{\rho}^1(e_3,e_3)=0.
\end{align}
\noindent By Lemma 3.24 in \cite{W3} and (2.12), we have for $(G_7,g,J,\nabla^1,V)$
\begin{align}
&(L^1_Vg)(e_1,e_1)=-2\alpha\lambda_2,~~(L^1_Vg)(e_1,e_2)=\alpha\lambda_1-\beta\lambda_2,\\\notag
&(L^1_Vg)(e_1,e_3)=-\alpha\lambda_1-\gamma\lambda_2-\beta\lambda_3,~~(L^1_Vg)(e_2,e_2)=2\beta\lambda_1,\\\notag
&(L^1_Vg)(e_2,e_3)=-\beta\lambda_1-\delta\lambda_2-\delta\lambda_3,~~(L^1_Vg)(e_3,e_3)=0.
\end{align}
If $(G_7,g,J,V)$ is an affine Ricci soliton associated to the connection $\nabla^1$, then by (2.14), we have
\begin{align}
\left\{\begin{array}{l}
-\alpha\lambda_2-\alpha^2+\lambda=0,\\
\alpha\lambda_1-\beta\lambda_2+\beta\delta-\alpha\beta=0,\\
-\alpha\lambda_1-\gamma\lambda_2-\beta\lambda_3+2\beta(\alpha+\delta)=0,\\
\beta\lambda_1-(\alpha^2+\beta^2+\beta\gamma)+\lambda=0,\\
-\beta\lambda_1-\delta\lambda_2-\delta\lambda_3+\beta\gamma+\alpha\delta+2\delta^2=0,\\
\lambda=0.\\
\end{array}\right.
\end{align}
Solve (2.61), we have
\vskip 0.5 true cm
\begin{thm}
$(G_7,g,J,V)$ is an affine Ricci soliton associated to the connection $\nabla^1$ if and only if\\
(i)$\lambda=\alpha=\beta=\gamma=0$, $\lambda_2+\lambda_3-2\delta=0$, $\delta\neq 0$,\\
(ii)$\lambda=\alpha=\beta=0$, $\gamma\neq 0$, $\lambda_2=0$, $\lambda_3=2\delta$, $\delta\neq 0$,\\
(iii)$\lambda=\alpha=0$, $\delta\neq 0$, $\beta\neq 0$, $\lambda_1=\beta+\gamma$, $\lambda_2=\delta$, $\lambda_3=\frac{-\gamma\delta+2\beta\delta}{\beta}$, $\gamma=\frac{\beta(\beta^2+\delta^2)}{\delta^2}$.
\end{thm}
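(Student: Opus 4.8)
The statement is nothing but the explicit solution set of the linear system (2.61) in the unknowns $\lambda,\lambda_1,\lambda_2,\lambda_3$, subject to the structural constraints $\alpha+\delta\neq 0$ and $\alpha\gamma=0$ inherited from (2.56). The plan is a careful case analysis. First, the sixth equation of (2.61) gives $\lambda=0$, so I would drop $\lambda$ from all the other equations and work with the remaining five. Since $\alpha\gamma=0$, I would split into the case $\alpha=0$ and the case $\alpha\neq 0$, in which necessarily $\gamma=0$.

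In the case $\alpha=0$ one automatically has $\delta\neq 0$, and the first equation of (2.61) becomes vacuous. I would then split on $\beta$. If $\beta=0$, the second and fourth equations become trivial, the third reduces to $\gamma\lambda_2=0$ and the fifth to $\delta(\lambda_2+\lambda_3-2\delta)=0$, i.e. $\lambda_2+\lambda_3=2\delta$; the subcase $\gamma=0$ is conclusion (i), while $\gamma\neq 0$ forces $\lambda_2=0$ and $\lambda_3=2\delta$, which is conclusion (ii). If $\beta\neq 0$, the second and fourth equations determine $\lambda_2=\delta$ and $\lambda_1=\beta+\gamma$ uniquely, the third determines $\lambda_3=\tfrac{-\gamma\delta+2\beta\delta}{\beta}$, and plugging all three back into the fifth equation, after clearing the denominator $\beta$, collapses it to the single relation $\gamma\delta^2=\beta(\beta^2+\delta^2)$, i.e. $\gamma=\tfrac{\beta(\beta^2+\delta^2)}{\delta^2}$; this is conclusion (iii).

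The case $\alpha\neq 0$ (hence $\gamma=0$) has to be shown empty, and I expect this to be the main obstacle. Here the first equation gives $\lambda_2=-\alpha$, the second then gives $\lambda_1=-\tfrac{\beta\delta}{\alpha}$, and substituting $\lambda_1$ into the fourth equation produces the obstruction $\alpha^3+\beta^2(\alpha+\delta)=0$. If $\beta=0$ this already forces $\alpha=0$, a contradiction. If $\beta\neq 0$, the third equation gives $\lambda_3=3\delta+2\alpha$, and the fifth equation then reduces to $\delta\!\left(\tfrac{\beta^2}{\alpha}-\delta\right)=0$; the branch $\delta=0$ forces $\alpha=0$ again through $\alpha^3+\beta^2(\alpha+\delta)=0$, so $\beta^2=\alpha\delta$, and feeding this into $\alpha^3+\beta^2(\alpha+\delta)=0$ yields $\alpha^2+\alpha\delta+\delta^2=0$. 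Since $\alpha^2+\alpha\delta+\delta^2=\bigl(\alpha+\tfrac{\delta}{2}\bigr)^2+\tfrac{3}{4}\delta^2$, this is impossible for $\alpha\neq 0$. Hence this case contributes nothing, and gathering the solutions from $\alpha=0$ gives exactly (i), (ii), (iii).

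The substance of the argument is entirely elementary; the only place where one can go wrong is the bookkeeping — one must keep track of which of $\alpha,\beta,\delta$ is known to be nonzero before dividing by it, and one must verify that every degenerate specialization of the parameters is absorbed into the three stated families rather than producing a spurious extra case.
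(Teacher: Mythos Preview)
Your proposal is correct and follows precisely the case decomposition the paper uses. The paper itself offers no proof for this particular theorem beyond the phrase ``Solve (2.61), we have'', but the only fully written-out argument in the paper for a system of this type (the proof of Theorem~3.16 for the perturbed connection $\nabla^3$ on $G_7$) splits on $\alpha=0$ versus $\alpha\neq 0$ (hence $\gamma=0$), then on $\beta$, then on $\gamma$ --- exactly your scheme. Your handling of the empty branch $\alpha\neq 0$, culminating in the positive-definite obstruction $\alpha^2+\alpha\delta+\delta^2=0$, matches the spirit of the paper's Case~ii)-b) there.
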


\section{Affine Ricci solitons associated to perturbed canonical connections and perturbed Kobayashi-Nomizu connections on three-dimensional Lorentzian Lie groups}
    \indent We note that in our classifications in Section 2 always $\lambda=0.$ In order to get the affine Ricci soliton with non zero $\lambda$,
    we introduce perturbed canonical connections and perturbed Kobayashi-Nomizu connections in the following. Let $e_3^*$ be the dual base of $e_3$.
    We define on $G_{i=1,\cdots,7}$
  \begin{equation}
\nabla^2_XY=\nabla^0_XY+\overline{\lambda}e_3^*(X)e_3^*(Y)e_3,
\end{equation}
 \begin{equation}
\nabla^3_XY=\nabla^1_XY+\overline{\lambda}e_3^*(X)e_3^*(Y)e_3,
\end{equation}
where $\overline{\lambda}$ is a non zero real number. Then
 \begin{equation}
\nabla^2_{e_3}e_3=\overline{\lambda}e_3,~~\nabla^2_{e_i}e_j=\nabla^0_{e_i}e_j;
\end{equation}
 \begin{equation}
\nabla^3_{e_3}e_3=\overline{\lambda}e_3,~~\nabla^3_{e_i}e_j=\nabla^1_{e_i}e_j.
\end{equation}
where $i$ or $j$ does not equal $3$. We let
\begin{equation}
(L^j_Vg)(Y,Z):=g(\nabla^j_YV,Z)+g(Y,\nabla^j_ZV),
\end{equation}
 for $j=2,3$ and vector fields $V,Y,Z$. Then we have for $G_{i=1,\cdots,7}$
\begin{equation}
(L^2_Vg)(e_3,e_3)=-2\overline{\lambda}\lambda_3,~~(L^2_Vg)(e_j,e_k)=(L^0_Vg)(e_j,e_k),
\end{equation}
\begin{equation}
(L^3_Vg)(e_3,e_3)=-2\overline{\lambda}\lambda_3,~~(L^3_Vg)(e_j,e_k)=(L^1_Vg)(e_j,e_k),
\end{equation}
where $j$ or $k$ does not equal $3$.
\begin{defn}
$(G_i,g,J)$ is called the affine Ricci soliton associated to the connection $\nabla^2$ if it satisfies
\begin{equation}
(L^2_Vg)(Y,Z)+2\widetilde{\rho}^2(Y,Z)+2\lambda g(Y,Z)=0.
\end{equation}
$(G_i,g,J)$ is called the affine Ricci soliton associated to the connection $\nabla^3$ if it satisfies
\begin{equation}
(L^3_Vg)(Y,Z)+2\widetilde{\rho}^3(Y,Z)+2\lambda g(Y,Z)=0.
\end{equation}
\end{defn}
For $(G_1,\nabla^2)$, similar to (2.16), we have
\begin{equation}
\widetilde{\rho}^2(e_2,e_3)=\frac{\alpha^2+\overline{\lambda}\alpha}{2},~~~\widetilde{\rho}^2(e_j,e_k)=\widetilde{\rho}^0(e_j,e_k),
\end{equation}
for the pair $(j,k)\neq (2,3)$.
If $(G_1,g,J,V)$ is an affine Ricci soliton associated to the connection $\nabla^2$, then by (3.8), we have
\begin{align}
\left\{\begin{array}{l}
2\lambda_2\alpha-2\alpha^2-\beta^2+2\lambda=0,\\
\lambda_1\alpha=0,\\
-\beta\lambda_2+\alpha\beta=0,\\
-2\alpha^2-\beta^2+2\lambda=0,\\
\frac{\beta}{2}\lambda_1+\alpha^2+\overline{\lambda}\alpha=0,\\
\overline{\lambda}\lambda_3+\lambda=0.\\
\end{array}\right.
\end{align}
Solve (3.11), we have
\vskip 0.5 true cm
\begin{thm}
$(G_1,g,J,V)$ is an affine Ricci soliton associated to the connection $\nabla^2$ if and only if
$\lambda_1=\lambda_2=0$, $\lambda_3=-\overline{\lambda}$, $\alpha=-\overline{\lambda}$, $\beta=0$, $\lambda=\overline{\lambda}^2$.
\end{thm}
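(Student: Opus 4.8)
The statement reduces to solving the polynomial system (3.11), and the plan is to do this by successive elimination, exploiting the two standing nondegeneracy hypotheses that are in force here: $\alpha\neq 0$ (from the structure equations (2.15)) and $\overline{\lambda}\neq 0$ (from the definition (3.1) of $\nabla^2$). These are exactly the facts that let every division below be performed without losing solution branches.

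First I would read off $\lambda_1$ and $\lambda_2$. The second equation of (3.11) is $\lambda_1\alpha=0$, so $\alpha\neq 0$ forces $\lambda_1=0$. Subtracting the fourth equation from the first kills the $-2\alpha^2-\beta^2$ terms and leaves $2\lambda_2\alpha=0$, hence $\lambda_2=0$. Feeding $\lambda_2=0$ into the third equation gives $\alpha\beta=0$, and again $\alpha\neq 0$ yields $\beta=0$. At this point the first and fourth equations coincide and the system has collapsed to the three relations coming from equations four, five, and six in the remaining unknowns $\alpha,\lambda,\lambda_3$ (with $\overline{\lambda}$ a parameter).

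Next I would pin down $\alpha$, $\lambda$, and $\lambda_3$. With $\beta=0$ and $\lambda_1=0$, the fifth equation becomes $\alpha^2+\overline{\lambda}\alpha=\alpha(\alpha+\overline{\lambda})=0$, so $\alpha\neq 0$ gives $\alpha=-\overline{\lambda}$. The fourth equation with $\beta=0$ gives $\lambda=\alpha^2=\overline{\lambda}^2$, and the sixth equation $\overline{\lambda}\lambda_3+\lambda=0$ then gives $\lambda_3=-\lambda/\overline{\lambda}=-\overline{\lambda}$ (here $\overline{\lambda}\neq 0$ is used). This establishes the ``only if'' direction. For the converse I would simply substitute $\lambda_1=\lambda_2=0$, $\beta=0$, $\alpha=-\overline{\lambda}$, $\lambda=\overline{\lambda}^2$, $\lambda_3=-\overline{\lambda}$ back into all six equations of (3.11) and verify each holds identically (the first equation, for instance, reads $-2\overline{\lambda}^2+2\overline{\lambda}^2=0$).

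There is essentially no deep obstacle here; the only point requiring care is the order of elimination, so that each cancellation is legitimate — in particular one must invoke $\alpha\neq 0$ before dividing in the second, third, and fifth equations, and $\overline{\lambda}\neq 0$ before dividing in the sixth. Because both hypotheses are guaranteed throughout, no solution branch is discarded, and the displayed values are the unique solution.
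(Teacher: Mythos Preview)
Your proof is correct and follows exactly the approach of the paper, which simply records ``Solve (3.11), we have'' without giving details; your elimination order (using $\alpha\neq 0$ and $\overline{\lambda}\neq 0$ to extract $\lambda_1,\lambda_2,\beta,\alpha,\lambda,\lambda_3$ in turn) is the natural and complete way to carry this out.
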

For $(G_1,\nabla^3)$, similar to (2.19), we have
\begin{equation}
\widetilde{\rho}^3(e_2,e_3)=\frac{\alpha^2+\overline{\lambda}\alpha}{2},~~~\widetilde{\rho}^3(e_j,e_k)=\widetilde{\rho}^1(e_j,e_k),
\end{equation}
for the pair $(j,k)\neq (2,3)$.
If $(G_1,g,J,V)$ is an affine Ricci soliton associated to the connection $\nabla^3$, then by (3.9), we have
\begin{align}
\left\{\begin{array}{l}
\lambda_2\alpha-\alpha^2-\beta^2+\lambda=0,\\
-\lambda_1\alpha+2\alpha\beta=0,\\
\lambda_1\alpha-\beta\lambda_2-\alpha\beta=0,\\
-\alpha^2-\beta^2+\lambda=0,\\
\beta\lambda_1-\alpha\lambda_2-\alpha\lambda_3+\alpha^2+\overline{\lambda}\alpha=0,\\
\overline{\lambda}\lambda_3+\lambda=0.\\
\end{array}\right.
\end{align}
Solve (3.13), we have
\vskip 0.5 true cm
\begin{thm}
$(G_1,g,J,V)$ is not an affine Ricci soliton associated to the connection $\nabla^3$.
\end{thm}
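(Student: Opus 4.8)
The plan is to show that the polynomial system (3.13) has no real solution, keeping in mind the standing hypothesis $\alpha\neq 0$ from (2.15) and the fact that $\overline{\lambda}\neq 0$ by construction of $\nabla^3$. First I would extract the easy consequences of the two ``diagonal'' equations: subtracting the fourth equation from the first gives $\lambda_2\alpha=0$, and since $\alpha\neq 0$ this forces $\lambda_2=0$.

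Next I would play the two off-diagonal equations (the $(e_1,e_2)$ and $(e_1,e_3)$ components) against each other. The second equation gives $\lambda_1\alpha=2\alpha\beta$, hence $\lambda_1=2\beta$; the third equation, after inserting $\lambda_2=0$, gives $\lambda_1\alpha=\alpha\beta$, hence $\lambda_1=\beta$. Comparing the two expressions for $\lambda_1$ forces $\beta=0$, and then $\lambda_1=0$ as well. The fourth equation now reads $\lambda=\alpha^2$, and the sixth equation gives $\lambda_3=-\lambda/\overline{\lambda}=-\alpha^2/\overline{\lambda}$.

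Finally I would substitute $\beta=\lambda_1=\lambda_2=0$ into the fifth equation, which collapses to $-\alpha\lambda_3+\alpha^2+\overline{\lambda}\alpha=0$; dividing by $\alpha\neq 0$ yields $\lambda_3=\alpha+\overline{\lambda}$. Equating this with the value $\lambda_3=-\alpha^2/\overline{\lambda}$ obtained above produces $\alpha^2+\alpha\overline{\lambda}+\overline{\lambda}^2=0$, i.e. $\left(\alpha+\frac{1}{2}\overline{\lambda}\right)^2+\frac{3}{4}\overline{\lambda}^2=0$, which is impossible for real numbers with $\overline{\lambda}\neq 0$. This contradiction establishes the theorem. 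There is no genuine obstacle here: the only point requiring care is bookkeeping, namely making sure every division is by a quantity already known to be nonzero ($\alpha$ by hypothesis, $\overline{\lambda}$ by definition), so the argument is a short linear-then-quadratic elimination.
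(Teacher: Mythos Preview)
Your proposal is correct and follows essentially the same route as the paper's proof: both derive $\lambda_2=0$, $\lambda_1=2\beta$ and $\lambda_1=\beta$ from the first four equations to force $\beta=\lambda_1=0$ and $\lambda=\alpha^2$, then use the fifth and sixth equations to reach the impossible quadratic $\alpha^2+\alpha\overline{\lambda}+\overline{\lambda}^2=0$. Your explicit completion of the square makes the final contradiction slightly more transparent than the paper's terse ``then $\overline{\lambda}=\alpha=0$''.
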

\begin{proof}By the first and second and fourth equations in (3.13) and $\alpha\neq 0$, we get $\lambda_2=0$, $\lambda_1=2\beta$, $\lambda=\alpha^2+\beta^2$, By the third equation in (3.13), we get $\lambda_1=\lambda_2=\beta=0$, $\lambda=\alpha^2$. By the fifth equation in (3.13), we get $\lambda_3=\alpha+\overline{\lambda}.$ By the sixth equation in (3,13), we get $\alpha^2+\overline{\lambda}\alpha+\overline{\lambda}^2=0.$ Then $\overline{\lambda}=\alpha=0$, this is a contradiction.
\end{proof}
For $(G_2,\nabla^2)$, similar to (2.23), we have
\begin{equation}
\widetilde{\rho}^2(e_1,e_3)=\frac{-\gamma\overline{\lambda}}{2},~~~\widetilde{\rho}^2(e_j,e_k)=\widetilde{\rho}^0(e_j,e_k),
\end{equation}
for the pair $(j,k)\neq (1,3)$.
If $(G_2,g,J,V)$ is an affine Ricci soliton associated to the connection $\nabla^2$, then by (3.8), we have
\begin{align}
\left\{\begin{array}{l}
-\left(\gamma^2+\frac{\alpha\beta}{2}\right)+\lambda=0,\\
\lambda_2\gamma=0,\\
\alpha\lambda_2+2\gamma\overline{\lambda}=0,\\
-\gamma\lambda_1-\left(\gamma^2+\frac{\alpha\beta}{2}\right)+\lambda=0,\\
\frac{\alpha}{2}\lambda_1+2(\frac{\beta\gamma}{2}-\frac{\alpha\gamma}{4})=0,\\
\overline{\lambda}\lambda_3+\lambda=0.\\
\end{array}\right.
\end{align}
Solve (3.15), we have
\vskip 0.5 true cm
\begin{thm}
$(G_2,g,J,V)$ is not an affine Ricci soliton associated to the connection $\nabla^2$.
\end{thm}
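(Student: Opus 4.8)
The plan is to show that the system (3.15) is inconsistent once we invoke the two standing nondegeneracy hypotheses: $\gamma\neq 0$, which is part of the defining relations (2.22) of $\mathfrak{g}_2$, and $\overline{\lambda}\neq 0$, which is built into the definition (3.1) of the perturbed connection $\nabla^2$. So I would not attempt to solve for all of $\lambda,\lambda_1,\lambda_2,\lambda_3,\alpha,\beta$; instead I would isolate a minimal contradictory subset of the six equations.

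First I would read off the second equation, $\lambda_2\gamma=0$. Since $\gamma\neq 0$, this forces $\lambda_2=0$. Next I would substitute $\lambda_2=0$ into the third equation, $\alpha\lambda_2+2\gamma\overline{\lambda}=0$, which then reduces to $2\gamma\overline{\lambda}=0$. But $\gamma\neq 0$ and $\overline{\lambda}\neq 0$, so this is impossible. Hence (3.15) has no solution, and by Definition 3.1 together with (3.8) the quadruple $(G_2,g,J,V)$ cannot be an affine Ricci soliton associated to $\nabla^2$.

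I do not expect any real obstacle here: the contradiction comes entirely from the second and third equations of (3.15) and involves none of $\lambda$, $\lambda_1$, $\lambda_3$, $\alpha$, or $\beta$, so the other four equations play no role. The only point requiring a little care is bookkeeping — making sure the perturbation term $2\gamma\overline{\lambda}$ in the $(e_1,e_3)$-equation is exactly the one produced by $\widetilde{\rho}^2(e_1,e_3)=-\gamma\overline{\lambda}/2$ in (3.14) combined with $\widetilde{\rho}^0(e_1,e_3)=0$ from (2.23), and that $\overline{\lambda}\neq 0$ is genuinely assumed (it is, immediately after (3.2)). This is precisely the feature that distinguishes the perturbed case from the unperturbed one in (2.25), where the analogous equation read $\alpha\lambda_2=0$ and carried no forced contradiction from $\gamma$ and $\overline{\lambda}$ alone.
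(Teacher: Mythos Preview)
Your argument is correct and follows exactly the route the paper implicitly takes: the paper simply writes ``Solve (3.15), we have'' before stating the theorem, and your derivation supplies the missing two-line computation showing that equations two and three of (3.15), together with $\gamma\neq 0$ and $\overline{\lambda}\neq 0$, already force a contradiction. Nothing further is needed.
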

For $(G_2,\nabla^3)$, similar to (2.26), we have
\begin{equation}
\widetilde{\rho}^3(e_1,e_3)=\frac{-\gamma\overline{\lambda}}{2},~~~\widetilde{\rho}^3(e_j,e_k)=\widetilde{\rho}^1(e_j,e_k),
\end{equation}
for the pair $(j,k)\neq (1,3)$.
If $(G_2,g,J,V)$ is an affine Ricci soliton associated to the connection $\nabla^3$, then by (3.9), we have
\begin{align}
\left\{\begin{array}{l}
-\beta^2-\gamma^2+\lambda=0,\\
\lambda_2\gamma=0,\\
-\alpha\lambda_2+\gamma\lambda_3-\gamma\overline{\lambda}=0,\\
-\gamma\lambda_1-\left(\gamma^2+\alpha\beta\right)+\lambda=0,\\
\lambda_1\beta-\alpha\gamma=0,\\
\overline{\lambda}\lambda_3+\lambda=0.\\
\end{array}\right.
\end{align}
Solve (3.17), we have
\vskip 0.5 true cm
\begin{thm}
$(G_2,g,J,V)$ is not an affine Ricci soliton associated to the connection $\nabla^3$.
\end{thm}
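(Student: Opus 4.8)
The plan is to show that the system (3.17) has no solution with $\overline{\lambda}\neq 0$, recalling that $\gamma\neq 0$ for $G_2$ by (2.22). First I would use the second equation $\lambda_2\gamma=0$ together with $\gamma\neq 0$ to conclude $\lambda_2=0$. Then the third equation reduces to $\gamma(\lambda_3-\overline{\lambda})=0$, hence $\lambda_3=\overline{\lambda}$. Feeding this into the sixth equation $\overline{\lambda}\lambda_3+\lambda=0$ gives $\lambda=-\overline{\lambda}^2$, which is strictly negative since $\overline{\lambda}\neq 0$.

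Next I would compare the first and fourth equations. Subtracting them yields $\gamma\lambda_1+\alpha\beta-\beta^2=0$, or equivalently $\gamma\lambda_1=\beta^2-\alpha\beta=\beta(\beta-\alpha)$. Also the first equation itself reads $\lambda=\gamma^2+\frac{\alpha\beta}{2}$. Combined with $\lambda=-\overline{\lambda}^2<0$ this forces $\gamma^2+\frac{\alpha\beta}{2}<0$, so in particular $\alpha\beta<0$ and hence $\alpha\neq 0$, $\beta\neq 0$.

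The main obstacle is extracting the contradiction from the remaining two equations, the fourth (rewritten) $\gamma\lambda_1=\beta(\beta-\alpha)$ and the fifth $\lambda_1\beta=\alpha\gamma$. From the fifth, $\lambda_1=\alpha\gamma/\beta$; substituting into $\gamma\lambda_1=\beta(\beta-\alpha)$ gives $\alpha\gamma^2/\beta=\beta(\beta-\alpha)$, i.e. $\alpha\gamma^2=\beta^2(\beta-\alpha)=\beta^3-\alpha\beta^2$. Meanwhile from the first equation $\gamma^2=\lambda-\frac{\alpha\beta}{2}=-\overline{\lambda}^2-\frac{\alpha\beta}{2}$; plugging this in, $\alpha(-\overline{\lambda}^2-\frac{\alpha\beta}{2})=\beta^3-\alpha\beta^2$, that is $-\alpha\overline{\lambda}^2-\frac{\alpha^2\beta}{2}=\beta^3-\alpha\beta^2$, which I would rearrange to $\alpha\overline{\lambda}^2=-\beta^3+\alpha\beta^2-\frac{\alpha^2\beta}{2}=\beta(-\beta^2+\alpha\beta-\frac{\alpha^2}{2})=-\beta\big((\beta-\tfrac{\alpha}{2})^2+\tfrac{\alpha^2}{4}\big)$.

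To close, I would analyze the sign: the bracket $(\beta-\tfrac{\alpha}{2})^2+\tfrac{\alpha^2}{4}$ is strictly positive since $\alpha\neq 0$, so $\alpha\overline{\lambda}^2$ and $\beta$ have opposite signs, i.e. $\alpha\beta<0$ forces $\alpha\overline{\lambda}^2>0$ when $\beta<0$ and $<0$ when $\beta>0$ — consistent so far, so I instead push on magnitudes. Since $\gamma^2>0$ we need $-\overline{\lambda}^2-\frac{\alpha\beta}{2}>0$, i.e. $\alpha\beta<-2\overline{\lambda}^2$. Writing $\alpha\overline{\lambda}^2=-\beta(\beta-\tfrac{\alpha}{2})^2-\tfrac{\alpha^2\beta}{4}$ and also using $\gamma\lambda_1=\beta(\beta-\alpha)$ with $\lambda_1=\alpha\gamma/\beta$ already used, the cleanest contradiction should come from substituting $\gamma^2=-\overline{\lambda}^2-\frac{\alpha\beta}{2}$ directly into $\alpha\gamma^2=\beta^2(\beta-\alpha)$ and treating this as a polynomial identity in $\alpha,\beta$ with the constraint $\alpha\beta<-2\overline{\lambda}^2<0$; one checks that the right side $\beta^2(\beta-\alpha)$ and the left side $\alpha(-\overline{\lambda}^2-\frac{\alpha\beta}{2})$ cannot be reconciled, since collecting terms gives $\alpha\overline{\lambda}^2 + \tfrac12\alpha^2\beta + \beta^3 - \alpha\beta^2 = 0$, and I would exhibit this as strictly of one sign under $\alpha\beta<0$ after completing the square, yielding the contradiction. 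I expect the sign bookkeeping in this last step to be the only delicate part; everything before it is linear elimination using $\gamma\neq 0$ and $\overline{\lambda}\neq 0$.
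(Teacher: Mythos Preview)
Your initial reductions are correct: from $\gamma\neq 0$ and the second equation you get $\lambda_2=0$; then the third equation forces $\lambda_3=\overline{\lambda}$; and the sixth gives $\lambda=-\overline{\lambda}^2<0$.

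The gap appears immediately after. You write that ``the first equation itself reads $\lambda=\gamma^2+\frac{\alpha\beta}{2}$,'' but that is the first equation of the $\nabla^2$ system (3.15), not of (3.17). For $\nabla^3$ the first equation of (3.17) is $-\beta^2-\gamma^2+\lambda=0$, i.e.\ $\lambda=\beta^2+\gamma^2$. Since $\gamma\neq 0$ this gives $\lambda>0$, which already contradicts $\lambda=-\overline{\lambda}^2<0$. The paper's proof is exactly this one-line contradiction after the linear elimination; no use of the fourth or fifth equations is needed.

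Because you carried forward the wrong expression for $\lambda$, the subsequent sign analysis (the attempt to show $\alpha\overline{\lambda}^2+\tfrac12\alpha^2\beta+\beta^3-\alpha\beta^2=0$ is impossible under $\alpha\beta<0$) is built on a false premise and, as you yourself note, does not actually close. Replace the misquoted first equation and the argument finishes immediately.
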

For $(G_3,\nabla^2)$, we have
$\widetilde{\rho}^2(e_j,e_k)=\widetilde{\rho}^0(e_j,e_k),
$ for any pairs $(j,k)$.
If $(G_3,g,J,V)$ is an affine Ricci soliton associated to the connection $\nabla^2$, then by (3.8), we have
\begin{align}
\left\{\begin{array}{l}
-\gamma a_3+\lambda=0,\\
\lambda_2a_3=0,\\
\lambda_1 a_3=0,\\
\overline{\lambda}\lambda_3+\lambda=0.\\
\end{array}\right.
\end{align}
Solve (3.18), we have
\vskip 0.5 true cm
\begin{thm}
$(G_3,g,J,V)$ is an affine Ricci soliton associated to the connection $\nabla^2$ if and only if\\
(i) $a_3\neq 0$, $\lambda_1=\lambda_2=0$, $\lambda=\gamma a_3$, $\lambda_3=-\frac{\gamma a_3}{\overline{\lambda}}$,\\
(ii)$a_3=\lambda=\lambda_3=0$.
\end{thm}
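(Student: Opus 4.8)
The plan is to solve the linear system (3.18), which records the six scalar equations obtained by substituting the values of $\widetilde{\rho}^2(e_j,e_k)=\widetilde{\rho}^0(e_j,e_k)$, of $(L^2_Vg)(e_j,e_k)$ (equal to $(L^0_Vg)(e_j,e_k)$ off the $(e_3,e_3)$ slot and equal to $-2\overline{\lambda}\lambda_3$ there), and of $g(e_j,e_k)$ into the soliton equation (3.8); the $(e_1,e_2)$ and $(e_2,e_2)$ components collapse to $0=0$, leaving exactly the four displayed equations. First I would isolate the two relations that hold in every solution: the last equation together with $\overline{\lambda}\neq 0$ forces $\lambda_3=-\lambda/\overline{\lambda}$, and the first equation gives $\lambda=\gamma a_3$, where $a_3=\tfrac12(\alpha+\beta-\gamma)$; combining these, $\lambda=\gamma a_3$ and $\lambda_3=-\gamma a_3/\overline{\lambda}$ in all cases.

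It then remains to read off the content of the middle two equations $\lambda_2 a_3=0$ and $\lambda_1 a_3=0$, which splits naturally according to whether $a_3$ vanishes. If $a_3\neq 0$, these force $\lambda_1=\lambda_2=0$; conversely, any tuple with $\lambda_1=\lambda_2=0$, $\lambda=\gamma a_3$, $\lambda_3=-\gamma a_3/\overline{\lambda}$ satisfies all four equations, giving conclusion (i). If $a_3=0$, then $\lambda=\gamma a_3=0$ and $\lambda_3=-\lambda/\overline{\lambda}=0$, while the second and third equations hold vacuously and $\lambda_1,\lambda_2$ are unconstrained, giving conclusion (ii). Since the alternatives $a_3\neq 0$ and $a_3=0$ are exhaustive and mutually exclusive, the stated characterization is both necessary and sufficient.

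I do not expect a genuine obstacle: the system is linear and decouples once one observes that $\overline{\lambda}\neq 0$ permits division in the last equation. The only points requiring care are to keep $a_3$ expressed through the structure constants as $\tfrac12(\alpha+\beta-\gamma)$ so that case (i) is the honest condition "$\alpha+\beta-\gamma\neq 0$", and to note explicitly that in case (ii) the parameters $\lambda_1,\lambda_2$ (and $\gamma$) remain free, so that the list (i)--(ii) is complete rather than merely a list of examples.
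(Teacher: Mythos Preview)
Your proposal is correct and follows essentially the same approach as the paper, which simply records the system (3.18) and reads off its solutions by the $a_3=0$ vs.\ $a_3\neq 0$ dichotomy. One tiny inaccuracy: the $(e_2,e_2)$ component does not collapse to $0=0$ but rather reproduces the $(e_1,e_1)$ equation $-\gamma a_3+\lambda=0$; this does not affect your argument, since the resulting four-equation system (3.18) is exactly what you then solve.
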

For $(G_3,\nabla^3)$, we have
$\widetilde{\rho}^3(e_j,e_k)=\widetilde{\rho}^1(e_j,e_k),
$ for any pairs $(j,k)$.
If $(G_3,g,J,V)$ is an affine Ricci soliton associated to the connection $\nabla^3$, then by (3.9), we have
\begin{align}
\left\{\begin{array}{l}
\gamma(a_1-a_3)+\lambda=0,\\
(a_2+a_3)\lambda_2=0,\\
-\gamma(a_2+a_3)+\lambda=0,\\
\lambda_1(a_3-a_1)=0,\\
\overline{\lambda}\lambda_3+\lambda=0.\\
\end{array}\right.
\end{align}
Solve (3.19), we have
\vskip 0.5 true cm
\begin{thm}
$(G_3,g,J,V)$ is an affine Ricci soliton associated to the connection $\nabla^3$ if and only if\\
(i) $\gamma=\lambda=\lambda_3=0$, $\alpha\lambda_2=0$, $\beta\lambda_1=0$,\\
(ii)$\gamma\neq 0$, $\alpha=\beta=\lambda=\lambda_3=0$,\\
(iii)$\gamma\neq 0$, $\alpha=\beta\neq 0$, $\lambda_1=\lambda_2=0$, $\lambda=\alpha\gamma$, $\lambda_3=-\frac{\alpha\gamma}{\overline{\lambda}}$.
\end{thm}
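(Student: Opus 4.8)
The plan is to solve the system (3.19) directly by a short case analysis on $\gamma$, in parallel with the proof of Theorem 2.6, now keeping the perturbation term $\overline{\lambda}\lambda_3$ in the last equation. First I would unwind the abbreviations: from $a_1=\frac{1}{2}(\alpha-\beta-\gamma)$ and $a_3=\frac{1}{2}(\alpha+\beta-\gamma)$ one gets $a_1-a_3=-\beta$ and $a_3-a_1=\beta$, and one also has $a_2+a_3=\alpha$. Substituting these into (3.19) turns it into the system $\lambda=\gamma\beta$, $\alpha\lambda_2=0$, $\lambda=\gamma\alpha$, $\beta\lambda_1=0$, $\overline{\lambda}\lambda_3=-\lambda$. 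Since $\overline{\lambda}\neq 0$, the last equation merely determines $\lambda_3=-\lambda/\overline{\lambda}$, so the essential constraints are the first four; subtracting the first from the third yields the compatibility relation $\gamma(\alpha-\beta)=0$.

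Next I would branch on $\gamma$. If $\gamma=0$, the first and third equations give $\lambda=0$, hence $\lambda_3=0$, while the surviving conditions are $\alpha\lambda_2=0$ and $\beta\lambda_1=0$; this is exactly case (i). If $\gamma\neq 0$, then $\alpha=\beta$ and $\lambda=\alpha\gamma$, and a secondary split is needed according to whether $\alpha=\beta$ vanishes: when $\alpha=\beta=0$ one gets $\lambda=\lambda_3=0$ with $\lambda_1,\lambda_2$ unconstrained, the two product equations being vacuous, which is case (ii); when $\alpha=\beta\neq 0$ the equations $\alpha\lambda_2=0$ and $\beta\lambda_1=0$ force $\lambda_1=\lambda_2=0$, and $\lambda=\alpha\gamma\neq 0$, $\lambda_3=-\alpha\gamma/\overline{\lambda}$, which is case (iii). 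Finally I would check the converse by direct substitution: each of (i), (ii), (iii) satisfies all five equations of (3.19).

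No serious obstacle is expected; once the $a_i$'s are expanded the argument is mechanical. The only steps needing care are the bookkeeping in the case $\gamma\neq 0$ --- observing that the first and third equations are compatible precisely because $\alpha=\beta$ forces $\gamma\beta=\gamma\alpha$ --- and the use of $\overline{\lambda}\neq 0$ to solve for $\lambda_3$ uniquely. It is worth pointing out, as foreshadowed at the start of Section 3, that $\nabla^3$ now admits affine Ricci solitons with $\lambda=\alpha\gamma\neq 0$ in case (iii), in contrast with the connection $\nabla^1$ treated in Theorem 2.6, which was the whole motivation for introducing the perturbed connections.
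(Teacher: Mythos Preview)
Your proposal is correct and takes essentially the same approach as the paper: the paper simply writes ``Solve (3.19), we have'' and states the classification, and your case analysis on $\gamma$ (after expanding $a_1-a_3=-\beta$, $a_3-a_1=\beta$, $a_2+a_3=\alpha$) is precisely the routine computation that this phrase abbreviates. Your bookkeeping and the use of $\overline{\lambda}\neq 0$ to determine $\lambda_3$ are all in order.
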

For $(G_4,\nabla^2)$, we have
\begin{equation}
\widetilde{\rho}^2(e_1,e_3)=\frac{\overline{\lambda}}{2},~~~\widetilde{\rho}^2(e_j,e_k)=\widetilde{\rho}^0(e_j,e_k),
\end{equation}
for the pair $(j,k)\neq (1,3)$.
If $(G_4,g,J,V)$ is an affine Ricci soliton associated to the connection $\nabla^2$, then by (3.8), we have
\begin{align}
\left\{\begin{array}{l}
(2\eta-\beta)b_3-1+\lambda=0,\\
\lambda_2=0,\\
-b_3\lambda_2+\overline{\lambda}=0,\\
\lambda_1+(2\eta-\beta)b_3-1+\lambda=0,\\
\lambda_1b_3+b_3-\beta=0,\\
\overline{\lambda}\lambda_3+\lambda=0.\\
\end{array}\right.
\end{align}
Solve (3.21), we have
\vskip 0.5 true cm
\begin{thm}
$(G_4,g,J,V)$ is not an affine Ricci soliton associated to the connection $\nabla^2$.
\end{thm}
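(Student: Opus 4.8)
The plan is to show that the linear/polynomial system (3.21) is inconsistent; since being an affine Ricci soliton associated to $\nabla^2$ is by Definition 3.1 (via (3.8)) equivalent to the existence of real numbers $\lambda,\lambda_1,\lambda_2,\lambda_3$ solving (3.21), inconsistency is exactly the assertion of the theorem. The key observation is that the perturbation term in $\nabla^2$ contributes the constant $\overline{\lambda}$ to the $(e_1,e_3)$-component of the soliton equation — this is what the entry $\widetilde{\rho}^2(e_1,e_3)=\frac{\overline{\lambda}}{2}$ in (3.20) records, which is why the third equation of (3.21) reads $-b_3\lambda_2+\overline{\lambda}=0$ rather than $-b_3\lambda_2=0$ as in (2.39) — while the $(e_1,e_2)$-component is unchanged and still forces $\lambda_2=0$.

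Concretely, first I would read off from the second equation of (3.21) that $\lambda_2=0$. Substituting this into the third equation $-b_3\lambda_2+\overline{\lambda}=0$ leaves $\overline{\lambda}=0$, contradicting the standing hypothesis that $\overline{\lambda}$ is a nonzero real number in the definition of the perturbed connection $\nabla^2$. Hence (3.21) has no solution, and $(G_4,g,J,V)$ is not an affine Ricci soliton associated to $\nabla^2$.

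There is essentially no obstacle here; the calculation is a two-line elimination. The only point worth flagging is that the remaining equations of (3.21) are irrelevant to the argument: one should resist exploiting the first, fourth and fifth equations first, since those (as in Theorem 2.9) do admit the solution $\alpha=0$, $\beta=\eta$, $\lambda_1=\lambda=0$. The content of the theorem is precisely that the newly introduced third equation is incompatible with $\lambda_2=0$ whenever $\overline{\lambda}\neq 0$, so the perturbation destroys the $\nabla^0$-soliton found in Theorem 2.9.
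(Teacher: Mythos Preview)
Your argument is correct and is exactly the elimination the paper has in mind when it writes ``Solve (3.21)'': equation two forces $\lambda_2=0$, whereupon equation three reduces to $\overline{\lambda}=0$, contradicting the standing assumption $\overline{\lambda}\neq 0$. The paper does not spell this out, so your two-line derivation is precisely the intended proof.
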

For $(G_4,\nabla^3)$, we have
\begin{equation}
\widetilde{\rho}^3(e_1,e_3)=\frac{\overline{\lambda}}{2},~~~\widetilde{\rho}^3(e_j,e_k)=\widetilde{\rho}^1(e_j,e_k),
\end{equation}
for the pair $(j,k)\neq (1,3)$.
If $(G_4,g,J,V)$ is an affine Ricci soliton associated to the connection $\nabla^3$, then by (3.9), we have
\begin{align}
\left\{\begin{array}{l}
-[1+(\beta-2\eta)(b_3-b_1)]+\lambda=0,\\
\lambda_2=0,\\
-(b_2+b_3)\lambda_2-\lambda_3+\overline{\lambda}=0,\\
\lambda_1-[1+(\beta-2\eta)(b_2+b_3)]+\lambda=0,\\
\lambda_1(b_3-b_1)+(\alpha+b_3-b_1-\beta)=0,\\
\overline{\lambda}\lambda_3+\lambda=0.\\
\end{array}\right.
\end{align}
Solve (3.23), we have
\vskip 0.5 true cm
\begin{thm}
$(G_4,g,J,V)$ is not an affine Ricci soliton associated to the connection $\nabla^3$.
\end{thm}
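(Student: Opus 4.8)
The plan is to show that the system (3.23) has no solution once the standing hypothesis $\overline{\lambda}\neq 0$ is imposed. The first thing I would do is simplify the auxiliary constants: from $b_1=\frac{\alpha}{2}+\eta-\beta$, $b_2=\frac{\alpha}{2}-\eta$, $b_3=\frac{\alpha}{2}+\eta$ one reads off at once that $b_3-b_1=\beta$ and $b_2+b_3=\alpha$. With these identities the first equation of (3.23) becomes $\lambda=1+\beta(\beta-2\eta)$, the fourth becomes $\lambda_1=1+\alpha(\beta-2\eta)-\lambda$, and the fifth becomes $\beta\lambda_1+\alpha=0$.

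Next I would exploit the second equation $\lambda_2=0$. Substituting it into the third equation gives $\lambda_3=\overline{\lambda}$, and plugging this into the sixth equation $\overline{\lambda}\lambda_3+\lambda=0$ forces $\lambda=-\overline{\lambda}^2$. On the other hand, the first equation already gave $\lambda=1+\beta(\beta-2\eta)$, and since $\eta^2=1$ this quantity equals
\[
\beta^2-2\eta\beta+\eta^2=(\beta-\eta)^2 .
\]
Hence $(\beta-\eta)^2=-\overline{\lambda}^2$. The left-hand side is a nonnegative real number and the right-hand side is nonpositive, so both must vanish; in particular $\overline{\lambda}=0$, contradicting $\overline{\lambda}\neq 0$. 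Therefore $(G_4,g,J,V)$ is not an affine Ricci soliton associated to the connection $\nabla^3$.

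The only point that needs a little care is noticing that the factor $\eta^2=1$ turns $1+\beta^2-2\eta\beta$ into the perfect square $(\beta-\eta)^2$; once that is observed, the sign obstruction closes the argument, and the fourth and fifth equations of (3.23) — which merely pin down $\lambda_1$ and constrain $\alpha,\beta$ — are never needed. I do not anticipate a genuine obstacle here: the reductions $b_3-b_1=\beta$ and $b_2+b_3=\alpha$ take a moment to verify, and everything after that is a short chain of substitutions.
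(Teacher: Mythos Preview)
Your proof is correct and is precisely a clean execution of what the paper leaves implicit with ``Solve (3.23)''. The perfect-square observation $1+\beta^2-2\eta\beta=(\beta-\eta)^2$ together with $\lambda_3=\overline{\lambda}$ and $\lambda=-\overline{\lambda}^2$ yields the contradiction $\overline{\lambda}=0$ directly, so your decision to bypass the fourth and fifth equations is justified.
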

For $(G_5,g,J,\nabla^2)$,
$\widetilde{\rho}^2(e_i,e_j)=0$, for $1\leq i,j\leq 3$.
If $(G_5,g,J,V)$ is an affine Ricci soliton associated to the connection $\nabla^2$, then by (3.8), we have
\begin{align}
\left\{\begin{array}{l}
\lambda=0,\\
(\beta-\gamma)\lambda_2=0,\\
(\beta-\gamma)\lambda_1=0,\\
\overline{\lambda}\lambda_3+\lambda=0.\\
\end{array}\right.
\end{align}
Solve (3.24), we have
\vskip 0.5 true cm
\begin{thm}
$(G_5,g,J,V)$ is an affine Ricci soliton associated to the connection $\nabla^2$ if and only if\\
(i)$\gamma\neq \beta$, $\lambda=\lambda_1=\lambda_2=\lambda_3=0$, $\alpha+\delta\neq 0$, $\alpha\gamma+\beta\delta=0$,\\
(ii)$\lambda=\beta=\gamma=0$,  $\alpha+\delta\neq 0$, $\lambda_3=0$.
\end{thm}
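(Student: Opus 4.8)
The plan is to solve the linear system (3.24) under the standing structural constraints $\alpha+\delta\neq 0$ and $\alpha\gamma+\beta\delta=0$ coming from (2.43); the stated equivalence is nothing more than an exhaustive case analysis of that system, since the expressions for $\widetilde{\rho}^2$ and $(L^2_Vg)$ on $G_5$ have already been recorded.

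First I would read off $\lambda=0$ from the first equation of (3.24). Substituting into the fourth equation gives $\overline{\lambda}\lambda_3=0$, and since $\overline{\lambda}\neq 0$ by the definition of the perturbed connection $\nabla^2$, this forces $\lambda_3=0$. Hence $\lambda=\lambda_3=0$ in every solution, which already accounts for those equalities appearing in both alternatives of the statement. The only remaining content of the system is the pair of equations $(\beta-\gamma)\lambda_1=0$ and $(\beta-\gamma)\lambda_2=0$.

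Next I would split on whether $\beta=\gamma$. If $\beta\neq\gamma$, these two equations force $\lambda_1=\lambda_2=0$, while the structural constraints $\alpha+\delta\neq 0$ and $\alpha\gamma+\beta\delta=0$ are untouched; this is precisely alternative (i). If instead $\beta=\gamma$, the two equations hold automatically, so $\lambda_1,\lambda_2$ remain free; but now the constraint $\alpha\gamma+\beta\delta=0$ rewrites as $\gamma(\alpha+\delta)=0$, and since $\alpha+\delta\neq 0$ this yields $\gamma=0$ and hence $\beta=\gamma=0$, which is alternative (ii). Conversely, it is immediate that each of (i) and (ii) makes all four equations of (3.24) hold, completing the equivalence.

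The computation is elementary and I do not expect any genuine obstacle beyond careful bookkeeping. The two points that must not be skipped are that the hypothesis $\overline{\lambda}\neq 0$ is exactly what converts $\overline{\lambda}\lambda_3=0$ into $\lambda_3=0$, and that the defining relations of the Lie algebra of $G_5$ have to be carried through the case split — it is the interaction of $\beta=\gamma$ with $\alpha\gamma+\beta\delta=0$ and $\alpha+\delta\neq 0$ that collapses the second branch down to $\beta=\gamma=0$.
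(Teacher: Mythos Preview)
Your proposal is correct and follows precisely the approach the paper takes, namely a direct case analysis of the system (3.24) together with the structural constraints $\alpha+\delta\neq 0$, $\alpha\gamma+\beta\delta=0$, and $\overline{\lambda}\neq 0$. In fact you supply more detail than the paper, which simply writes ``Solve (3.24), we have'' before stating the theorem; your observation that $\beta=\gamma$ forces $\gamma(\alpha+\delta)=0$ and hence $\beta=\gamma=0$ is exactly the step needed to see that the two alternatives are exhaustive.
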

For $(G_5,g,J,\nabla^3)$,
$\widetilde{\rho}^3(e_i,e_j)=0$, for $1\leq i,j\leq 3$.
If $(G_5,g,J,V)$ is an affine Ricci soliton associated to the connection $\nabla^3$, then by (3.9), we have
\begin{align}
\left\{\begin{array}{l}
\lambda=0,\\
\alpha\lambda_1+\gamma\lambda_2=0,\\
\beta\lambda_1+\delta\lambda_2=0,\\
\overline{\lambda}\lambda_3+\lambda=0.\\
\end{array}\right.
\end{align}
\noindent Solve (3.25), we have
\vskip 0.5 true cm
\begin{thm}
$(G_5,g,J,V)$ is an affine Ricci soliton associated to the connection $\nabla^3$ if and only if\\
(i)$\lambda=\lambda_1=\lambda_2=\lambda_3=0,$\\
(ii)$\lambda=\lambda_2=\lambda_3=\alpha=\beta=0$, $\lambda_1\neq 0$, $\delta\neq 0$,\\
(iii)$\lambda=0$, $\lambda_1=\lambda_3= 0$, $\lambda_2\neq 0$, $\delta=\gamma=0$, $\alpha\neq 0$.\\
\end{thm}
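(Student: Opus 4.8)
The plan is to solve the linear (and mildly nonlinear) system (3.25) directly, reading off constraints one equation at a time. First I would use the first equation, $\lambda=0$, together with the fourth equation $\overline{\lambda}\lambda_3+\lambda=0$ and the hypothesis $\overline{\lambda}\neq 0$ to conclude $\lambda_3=0$ in every case. Thus the whole problem reduces to the two bilinear equations $\alpha\lambda_1+\gamma\lambda_2=0$ and $\beta\lambda_1+\delta\lambda_2=0$ in the unknowns $\lambda_1,\lambda_2$, subject to the standing structural constraints from (2.44): $\alpha+\delta\neq 0$ and $\alpha\gamma+\beta\delta=0$.

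Next I would do a case split on $(\lambda_1,\lambda_2)$. If $\lambda_1=\lambda_2=0$ the two bilinear equations hold automatically, giving case (i). If exactly one of them is nonzero, say $\lambda_1\neq 0$, $\lambda_2=0$: then the equations force $\alpha=0$ and $\beta=0$; combined with $\alpha+\delta\neq 0$ this gives $\delta\neq 0$ (and $\alpha\gamma+\beta\delta=0$ is then automatic), which is case (ii). Symmetrically, if $\lambda_2\neq 0$, $\lambda_1=0$: the equations force $\gamma=0$ and $\delta=0$, hence $\alpha\neq 0$ from $\alpha+\delta\neq 0$, which is case (iii). The only remaining possibility is $\lambda_1\neq 0$ and $\lambda_2\neq 0$; here I would argue that the linear system $\bigl(\begin{smallmatrix}\alpha&\gamma\\ \beta&\delta\end{smallmatrix}\bigr)(\lambda_1,\lambda_2)^T=0$ has a nontrivial solution only if $\alpha\delta-\beta\gamma=0$, and I would check that this, together with $\alpha\gamma+\beta\delta=0$ and $\alpha+\delta\neq 0$, forces one of $\lambda_1,\lambda_2$ to vanish after all (e.g. multiply the first relation by $\lambda_1$, the second by $\lambda_2$, and combine; or observe the rows must be proportional and trace back), so this case collapses into (i)–(iii) and produces nothing new.

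The main obstacle, such as it is, is handling the $\lambda_1\lambda_2\neq 0$ branch cleanly: one must make sure the structural constraints $\alpha+\delta\neq 0$ and $\alpha\gamma+\beta\delta=0$ genuinely rule it out rather than silently permitting an extra family. I would resolve this by writing $\alpha\lambda_1=-\gamma\lambda_2$ and $\delta\lambda_2=-\beta\lambda_1$, substituting into $\alpha\gamma+\beta\delta=0$ after suitable scaling, and extracting a contradiction with $\alpha+\delta\neq 0$; alternatively, noting that $(\lambda_1,\lambda_2)$ nonzero in both slots forces the columns $(\alpha,\beta)$ and $(\gamma,\delta)$ to be parallel, and then the two structural conditions pin down the common direction in a way incompatible with $\alpha+\delta\neq 0$. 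Once that branch is dispatched, assembling cases (i), (ii), (iii) and invoking Definition 3.1 (equation (3.9)) to conclude the equivalence is routine, completing the proof.
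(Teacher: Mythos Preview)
Your proposal is correct and follows essentially the same approach as the paper, which simply says ``Solve (3.25)'' without further elaboration; you have merely supplied the details of that solution, including the crucial elimination of the $\lambda_1\lambda_2\neq 0$ branch (substituting $\alpha=-\gamma\lambda_2/\lambda_1$, $\delta=-\beta\lambda_1/\lambda_2$ into $\alpha\gamma+\beta\delta=0$ gives $\gamma^2\lambda_2^2+\beta^2\lambda_1^2=0$, hence $\beta=\gamma=0$, hence $\alpha=\delta=0$, contradicting $\alpha+\delta\neq 0$). One minor correction: the structural constraints for $G_5$ are recorded in (2.43), not (2.44).
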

For $(G_6,\nabla^2)$, we have
\begin{equation}
\widetilde{\rho}^2(e_1,e_3)=\frac{\delta\overline{\lambda}}{2},~~~\widetilde{\rho}^2(e_j,e_k)=\widetilde{\rho}^0(e_j,e_k),
\end{equation}
for the pair $(j,k)\neq (1,3)$.
If $(G_6,g,J,V)$ is an affine Ricci soliton associated to the connection $\nabla^2$, then by (3.8), we have
\begin{align}
\left\{\begin{array}{l}
\frac{1}{2}\beta(\beta-\gamma)-\alpha^2+\lambda=0,\\
\alpha\lambda_2=0,\\
({\gamma-\beta})\lambda_2+2\delta\overline{\lambda}=0,\\
-\alpha\lambda_1+\frac{1}{2}\beta(\beta-\gamma)-\alpha^2+\lambda=0,\\
\frac{\beta-\gamma}{2}\lambda_1-\gamma\alpha+\frac{1}{2}\delta(\beta-\gamma)=0,\\
\overline{\lambda}\lambda_3+\lambda=0.\\
\end{array}\right.
\end{align}
Solve (3.27), we have
\vskip 0.5 true cm
\begin{thm}
$(G_6,g,J,V)$ is an affine Ricci soliton associated to the connection $\nabla^2$ if and only if\\
(i)$\alpha=\beta=0$, $\delta\neq 0$, $\gamma\neq 0$, $\lambda=\lambda_3=0$, $\lambda_1=-\delta$, $\lambda_2=-\frac{2\delta\overline{\lambda}}{\gamma}$,\\
(ii)$\alpha\neq 0$, $\lambda_1=\lambda_2=\gamma=\delta=0$, $\lambda=\alpha^2-\frac{1}{2}\beta^2$, $\lambda_3=-\frac{\lambda}{\overline{\lambda}}$.
\end{thm}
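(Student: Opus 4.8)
The plan is to solve the linear-in-$(\lambda,\lambda_1,\lambda_2,\lambda_3)$ system \eqref{} displayed as (3.27) directly, treating the structure constants $\alpha,\beta,\gamma,\delta$ as given subject to the standing constraints $\alpha+\delta\neq 0$ and $\alpha\gamma-\beta\delta=0$ coming from (2.48), and remembering that $\overline{\lambda}\neq 0$ by the very definition of $\nabla^2$. The only sensible case split is on whether $\alpha=0$.

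First I would dispose of the case $\alpha\neq 0$. The second equation of (3.27) gives $\lambda_2=0$ immediately; the third equation then reduces to $2\delta\overline{\lambda}=0$, so $\delta=0$ since $\overline{\lambda}\neq 0$. Now the structure-constant identity $\alpha\gamma-\beta\delta=0$ collapses to $\alpha\gamma=0$, hence $\gamma=0$. Substituting $\gamma=\delta=0$ into the first equation yields $\lambda=\alpha^2-\tfrac{1}{2}\beta^2$; subtracting the first equation from the fourth gives $\alpha\lambda_1=0$, so $\lambda_1=0$. With $\lambda_1=\gamma=\delta=0$ the fifth equation is satisfied automatically, and the sixth equation gives $\lambda_3=-\lambda/\overline{\lambda}$. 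Checking that $\alpha+\delta=\alpha\neq 0$ and $\alpha\gamma-\beta\delta=0$ hold, this is exactly family (ii), and conversely these values solve all six equations.

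Next I would treat $\alpha=0$. Then $\alpha+\delta\neq 0$ forces $\delta\neq 0$, and $\alpha\gamma-\beta\delta=0$ reduces to $\beta\delta=0$, hence $\beta=0$. The first equation now reads $\lambda=0$, so the sixth gives $\lambda_3=0$. With $\beta=0$ the third equation becomes $\gamma\lambda_2+2\delta\overline{\lambda}=0$; since $\delta\overline{\lambda}\neq 0$ this forces $\gamma\neq 0$ and $\lambda_2=-2\delta\overline{\lambda}/\gamma$. Finally the fifth equation becomes $-\tfrac{\gamma}{2}\lambda_1-\tfrac{\gamma\delta}{2}=0$, i.e. $\gamma(\lambda_1+\delta)=0$, so $\lambda_1=-\delta$. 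This recovers family (i), and again the converse is verified by direct substitution.

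The argument is essentially bookkeeping and I expect no genuine obstacle; the only points requiring care are keeping straight which of the two structure-constant identities becomes active in each branch (the relation $\alpha\gamma-\beta\delta=0$ kills $\gamma$ when $\alpha\neq 0$ and kills $\beta$ when $\alpha=0$) and noting that it is precisely $\overline{\lambda}\neq 0$ that makes the third equation nontrivial — this is the new feature relative to the unperturbed case treated in Theorem 2.13. One should also double-check that no degenerate sub-case (e.g.\ $\gamma=\beta$, or $\delta=0$ with $\alpha=0$) has been silently excluded, but each such possibility is ruled out along the way by $\overline{\lambda}\neq 0$ and $\alpha+\delta\neq 0$.
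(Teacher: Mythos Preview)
Your proof is correct and is essentially what the paper does: the paper's argument for this theorem is literally ``Solve (3.27), we have'', so your explicit case split on $\alpha$ together with the structure constraints $\alpha+\delta\neq 0$, $\alpha\gamma-\beta\delta=0$, and $\overline{\lambda}\neq 0$ simply fills in the routine bookkeeping the paper omits.
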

For $(G_6,\nabla^3)$, we have
\begin{equation}
\widetilde{\rho}^3(e_1,e_3)=\frac{\delta\overline{\lambda}}{2},~~~\widetilde{\rho}^3(e_j,e_k)=\widetilde{\rho}^1(e_j,e_k),
\end{equation}
for the pair $(j,k)\neq (1,3)$.
If $(G_6,g,J,V)$ is an affine Ricci soliton associated to the connection $\nabla^3$, then by (3.9), we have
\begin{align}
\left\{\begin{array}{l}
-(\alpha^2+\beta\gamma)+\lambda=0,\\
\lambda_2\alpha=0,\\
-\delta\lambda_3+\delta\overline{\lambda}=0,\\
-\alpha\lambda_1-\alpha^2+\lambda=0,\\
\gamma\lambda_1=0,\\
\overline{\lambda}\lambda_3+\lambda=0.\\
\end{array}\right.
\end{align}
Solve (3.29), we have
\vskip 0.5 true cm
\begin{thm}
$(G_6,g,J,V)$ is an affine Ricci soliton associated to the connection $\nabla^3$ if and only if $\alpha\neq 0$, $\lambda_1=\lambda_2=\gamma=\delta=0$, $\lambda=\alpha^2$, $\lambda_3=-\frac{\alpha^2}{\overline{\lambda}}$.
\end{thm}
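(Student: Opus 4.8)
The plan is to solve the polynomial system (3.29) outright, keeping in view the structural identities $\alpha+\delta\neq 0$ and $\alpha\gamma-\beta\delta=0$ from (2.48) and the standing assumption $\overline{\lambda}\neq 0$. First I would subtract the fourth equation from the first to eliminate $\lambda$, obtaining $\alpha\lambda_1=\beta\gamma$; combined with the fifth equation $\gamma\lambda_1=0$, this naturally branches the argument into the cases $\gamma\neq 0$ and $\gamma=0$.

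In the branch $\gamma\neq 0$, the fifth equation forces $\lambda_1=0$, so $\beta\gamma=0$ gives $\beta=0$, and then $\alpha\gamma-\beta\delta=0$ forces $\alpha=0$. The first equation now gives $\lambda=0$, the sixth gives $\lambda_3=0$ (here $\overline{\lambda}\neq 0$ is used), and the third gives $\delta\overline{\lambda}=0$, hence $\delta=0$; but $\alpha=\delta=0$ contradicts $\alpha+\delta\neq 0$, so this branch is empty.

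In the branch $\gamma=0$, the first equation reads $\lambda=\alpha^2$ and the structural identity becomes $\beta\delta=0$. If $\alpha=0$, then $\lambda=0$, the sixth equation gives $\lambda_3=0$, and the third gives $\delta\overline{\lambda}=0$, i.e. $\delta=0$, again contradicting $\alpha+\delta\neq 0$; hence $\alpha\neq 0$. Then the second equation gives $\lambda_2=0$, the fourth (using $\lambda=\alpha^2$) gives $\lambda_1=0$, the sixth gives $\lambda_3=-\alpha^2/\overline{\lambda}$, and the third becomes $\delta(\overline{\lambda}-\lambda_3)=\delta(\overline{\lambda}^2+\alpha^2)/\overline{\lambda}=0$; since $\overline{\lambda}^2+\alpha^2>0$ we conclude $\delta=0$, which also satisfies $\beta\delta=0$ and is consistent with $\alpha+\delta=\alpha\neq 0$. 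This is precisely the asserted solution, with $\beta$ remaining a free parameter, and a direct back-substitution confirms it solves all six equations. There is no genuine obstacle here; the only care required is to feed in the constraints from (2.48) and the hypothesis $\overline{\lambda}\neq 0$ at the right moments, and to notice that $\beta$ survives unconstrained (which is why the theorem statement omits it).
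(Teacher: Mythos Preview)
Your argument is correct and complete. The paper itself gives no proof beyond the phrase ``Solve (3.29), we have'', so your case analysis is precisely the missing detail: subtracting the fourth equation from the first to get $\alpha\lambda_1=\beta\gamma$, branching on $\gamma$, and using the structural constraints $\alpha+\delta\neq 0$, $\alpha\gamma-\beta\delta=0$, $\overline{\lambda}\neq 0$ to close each branch is the natural route, and your observation that $\beta$ survives as a free parameter (hence its absence from the theorem statement) is exactly right.
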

For $(G_7,\nabla^2)$, we have
\begin{equation}
\widetilde{\rho}^2(e_1,e_3)=\frac{1}{2}(\beta\overline{\lambda}-\alpha\gamma-\frac{\delta\gamma}{2}),
\widetilde{\rho}^2(e_2,e_3)=\frac{1}{2}(\delta\overline{\lambda}+\alpha^2+\frac{\beta\gamma}{2}),
~~~\widetilde{\rho}^2(e_j,e_k)=\widetilde{\rho}^0(e_j,e_k),
\end{equation}
for the pair $(j,k)\neq (1,3),(2,3)$.
If $(G_7,g,J,V)$ is an affine Ricci soliton associated to the connection $\nabla^2$, then by (3.8), we have
\begin{align}
\left\{\begin{array}{l}
-\alpha\lambda_2-(\alpha^2+\frac{\beta\gamma}{2})+\lambda=0,\\
\alpha\lambda_1-\beta\lambda_2=0,\\
(\beta-\frac{\gamma}{2})\lambda_2+\beta\overline{\lambda}-(\gamma\alpha+\frac{\delta\gamma}{2})=0,\\
\beta\lambda_1-(\alpha^2+\frac{\beta\gamma}{2})+\lambda=0,\\
(\frac{\gamma}{2}-\beta)\lambda_1+\delta\overline{\lambda}+\alpha^2+\frac{\beta\gamma}{2}=0,\\
\overline{\lambda}\lambda_3+\lambda=0.\\
\end{array}\right.
\end{align}
Solve (3.31), we have
\vskip 0.5 true cm
\begin{thm}
$(G_7,g,J,V)$ is an affine Ricci soliton associated to the connection $\nabla^2$ if and only if\\
(i)$\alpha=\beta=0$, $\gamma\neq 0$, $\delta\neq 0$, $\lambda=0$, $\lambda_1=-\frac{2\delta\overline{\lambda}}{\gamma}$, $\lambda_2=-\delta$, $\lambda_3=0$,\\
(ii)$\alpha\neq 0$, $\lambda_1=\lambda_2=\beta=\gamma=0$, $\lambda=\alpha^2$, $\delta\neq 0$, $\overline{\lambda}=-\frac{\alpha^2}{\delta}$, $\lambda_3=\delta$.
\end{thm}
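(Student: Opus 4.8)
The plan is to solve the polynomial system (3.31) outright, using the structural data of $G_7$ from (2.55) --- in particular $\alpha+\delta\neq 0$ and $\alpha\gamma=0$ --- together with the standing hypothesis $\overline{\lambda}\neq 0$. The relation $\alpha\gamma=0$ invites a case split into $\alpha\neq 0$ (which forces $\gamma=0$) and $\alpha=0$; in each branch the six equations of (3.31) become essentially linear in $\lambda_1,\lambda_2,\lambda_3,\lambda$, so the analysis reduces to taking the right combinations.

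First suppose $\alpha\neq 0$, hence $\gamma=0$. Subtracting the fourth equation of (3.31) from the first gives $\alpha\lambda_2+\beta\lambda_1=0$, which combined with the second equation $\alpha\lambda_1-\beta\lambda_2=0$ yields $(\alpha^2+\beta^2)\lambda_1=0$, so $\lambda_1=0$ and then $\lambda_2=0$. The first equation then gives $\lambda=\alpha^2$; the third reduces to $\beta\overline{\lambda}=0$, so $\beta=0$ since $\overline{\lambda}\neq 0$; the fifth gives $\delta\overline{\lambda}+\alpha^2=0$, which forces $\delta\neq 0$ and $\overline{\lambda}=-\alpha^2/\delta$; and the sixth gives $\lambda_3=-\lambda/\overline{\lambda}=\delta$. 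This is exactly alternative (ii).

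Next suppose $\alpha=0$. The first equation gives $\lambda=\beta\gamma/2$, whereupon the second and fourth equations read $\beta\lambda_2=0$ and $\beta\lambda_1=0$. If $\beta\neq 0$, then $\lambda_1=\lambda_2=0$, and the third and fifth equations collapse to $\beta\overline{\lambda}=\delta\gamma/2$ and $\delta\overline{\lambda}=-\beta\gamma/2$; since $\alpha+\delta\neq 0$ forces $\delta\neq 0$, eliminating $\overline{\lambda}$ gives $\gamma(\beta^2+\delta^2)=0$, hence $\gamma=0$ and then $\overline{\lambda}=0$, contradicting $\overline{\lambda}\neq 0$. So $\beta=0$; then $\lambda=0$, the sixth equation forces $\lambda_3=0$, the third equation reads $\gamma(\lambda_2+\delta)=0$ and the fifth reads $\frac{\gamma}{2}\lambda_1+\delta\overline{\lambda}=0$. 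If $\gamma=0$ the fifth equation would give $\overline{\lambda}=0$, impossible; so $\gamma\neq 0$, and then $\lambda_2=-\delta$ and $\lambda_1=-2\delta\overline{\lambda}/\gamma$, which is alternative (i). Conversely one checks by direct substitution that both parameter families do satisfy (3.31).

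The only step requiring more than mechanical elimination is the subcase $\alpha=0$, $\beta\neq 0$: one has to notice that the relations coming from the third and fifth equations of (3.31) combine into the sum-of-squares identity $\gamma(\beta^2+\delta^2)=0$, which is incompatible with $\delta\neq 0$ and $\overline{\lambda}\neq 0$. Everything else is bookkeeping with the linear relations among $\lambda_1,\lambda_2,\lambda_3,\lambda$ and the sign conventions already fixed in (2.56)--(2.57) and (3.30).
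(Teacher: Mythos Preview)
Your argument is correct and proceeds by the same case split on $\alpha\gamma=0$ that the paper implicitly uses (and makes explicit in the proof of the adjacent Theorem~3.18), with the only cosmetic difference being that you treat $\alpha\neq 0$ before $\alpha=0$. All eliminations check against the system~(3.31) under the standing constraints $\alpha+\delta\neq 0$, $\alpha\gamma=0$, $\overline{\lambda}\neq 0$.
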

For $(G_7,\nabla^3)$, we have
\begin{equation}
\widetilde{\rho}^3(e_1,e_3)=\alpha\beta+\beta\delta+\frac{\beta\overline{\lambda}}{2},~~
\widetilde{\rho}^3(e_2,e_3)=\frac{1}{2}(\beta\gamma+\alpha\delta+2\delta^2+\delta\overline{\lambda}),
~~~\widetilde{\rho}^3(e_j,e_k)=\widetilde{\rho}^1(e_j,e_k),
\end{equation}
for the pair $(j,k)\neq (1,3),(2,3)$.
If $(G_7,g,J,V)$ is an affine Ricci soliton associated to the connection $\nabla^3$, then by (3.9), we have
\begin{align}
\left\{\begin{array}{l}
-\alpha\lambda_2-\alpha^2+\lambda=0,\\
\alpha\lambda_1-\beta\lambda_2+\beta\delta-\alpha\beta=0,\\
-\alpha\lambda_1-\gamma\lambda_2-\beta\lambda_3+2\beta(\alpha+\delta+\frac{\overline{\lambda}}{2})=0,\\
\beta\lambda_1-(\alpha^2+\beta^2+\beta\gamma)+\lambda=0,\\
-\beta\lambda_1-\delta\lambda_2-\delta\lambda_3+\beta\gamma+\alpha\delta+2\delta^2+\delta\overline{\lambda}=0,\\
\overline{\lambda}\lambda_3+\lambda=0.\\
\end{array}\right.
\end{align}
Solve (3.33), we have
\vskip 0.5 true cm
\begin{thm}
$(G_7,g,J,V)$ is an affine Ricci soliton associated to the connection $\nabla^3$ if and only if\\
(i)$\lambda=\alpha=\beta=\gamma=\lambda_3=0$, $\delta\neq 0$, $\lambda_2=2\delta+\overline{\lambda}$,\\
(ii)$\alpha=\beta=\lambda=\lambda_2=\lambda_3=0$, $\gamma\neq 0$, $\delta\neq 0$, $\overline{\lambda}=-2\delta$,\\
(iii)$\alpha=\lambda=\lambda_3=0$, $\beta\neq 0$, $\delta\neq 0$, $\lambda_1=\beta+\gamma$, $\lambda_2=\delta$, $\overline{\lambda}=\frac{\gamma\delta-2\beta\delta}{\beta}$, $\gamma=\frac{\beta^3+\beta\delta^2}{\delta^2}$,\\
(iv)$\alpha\neq 0$, $\beta=\gamma=\delta=\lambda_1=\lambda_2=0$, $\lambda=\alpha^2$, $\lambda_3=-\frac{\alpha^2}{\overline{\lambda}}$,\\
(v)$\alpha\neq 0$, $\beta=\gamma=\lambda_1=\lambda_2=0$, $\lambda=\alpha^2$, $\delta\neq 0$, $\lambda_3=\alpha+2\delta+\overline{\lambda}$,
$\overline{\lambda}^2+(\alpha+2\delta)\overline{\lambda}+\alpha^2=0$.
\end{thm}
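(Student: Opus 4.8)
The strategy is to solve the system (3.33), which is linear in $\lambda_1,\lambda_2,\lambda_3$, case by case, using the defining relations of $G_7$ in (2.55), namely $\alpha\gamma=0$ and $\alpha+\delta\neq 0$, together with $\overline{\lambda}\neq 0$. Since $\alpha\gamma=0$ I split into the branch $\alpha=0$ and the branch $\alpha\neq 0$ (where necessarily $\gamma=0$), and within each branch into the subcases $\beta=0$ and $\beta\neq 0$.

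Assume first $\alpha=0$. The first equation of (3.33) forces $\lambda=0$, and then the sixth equation with $\overline{\lambda}\neq 0$ gives $\lambda_3=0$; moreover $\alpha+\delta\neq 0$ now reads $\delta\neq 0$. If in addition $\beta=0$, the fifth equation collapses to $\delta(-\lambda_2+2\delta+\overline{\lambda})=0$, hence $\lambda_2=2\delta+\overline{\lambda}$, while the third equation becomes $\gamma\lambda_2=0$; the alternatives $\gamma=0$ and $\gamma\neq 0$ give precisely (i) (with $\lambda_1$ unconstrained) and (ii) (there $\gamma\neq 0$ forces $\lambda_2=0$, hence $\overline{\lambda}=-2\delta$). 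If instead $\beta\neq 0$, the second and fourth equations give $\lambda_2=\delta$ and $\lambda_1=\beta+\gamma$, the third gives $\overline{\lambda}=(\gamma\delta-2\beta\delta)/\beta$, and the fifth reduces to $\delta\overline{\lambda}=\beta^2-\delta^2$; eliminating $\overline{\lambda}$ between the latter two yields $\gamma\delta^2=\beta^3+\beta\delta^2$, i.e.\ $\gamma=(\beta^3+\beta\delta^2)/\delta^2$, which is case (iii) (and $\overline{\lambda}\neq 0$ there is the requirement $\beta^2\neq\delta^2$).

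Assume now $\alpha\neq 0$, so $\gamma=0$. If $\beta=0$, the second and fourth equations give $\lambda_1=\lambda_2=0$ and $\lambda=\alpha^2$, while the third equation holds automatically. If moreover $\delta=0$ the fifth equation holds automatically and the sixth gives $\lambda_3=-\alpha^2/\overline{\lambda}$, which is case (iv); if $\delta\neq 0$ the fifth equation gives $\lambda_3=\alpha+2\delta+\overline{\lambda}$, and substituting into the sixth produces $\overline{\lambda}^2+(\alpha+2\delta)\overline{\lambda}+\alpha^2=0$, which is case (v). It remains to rule out $\alpha\neq 0$, $\beta\neq 0$, $\gamma=0$. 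Here the first and fourth equations eliminate $\lambda$ to $\beta\lambda_1+\alpha\lambda_2=\beta^2$; combined with the second equation this forces $\lambda_1=\beta(\alpha^2+\beta^2-\alpha\delta)/(\alpha^2+\beta^2)$ and $\lambda_2=\beta^2\delta/(\alpha^2+\beta^2)$, and the third equation then gives $\lambda_3=\alpha+2\delta+\overline{\lambda}+\alpha^2\delta/(\alpha^2+\beta^2)$. Plugging all of this into the fifth equation, the terms in $\overline{\lambda}$, $\alpha\delta$ and $\delta^2$ cancel, and after clearing the factor $\alpha^2+\beta^2$ one is left with $\beta^2(\alpha^2+\beta^2-\alpha\delta+\delta^2)+\alpha^2\delta^2=0$. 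Since $\alpha^2-\alpha\delta+\delta^2=(\alpha-\tfrac{\delta}{2})^2+\tfrac34\delta^2\geq 0$, the left-hand side is at least $\beta^4>0$, a contradiction.

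I expect this last subcase to be the main obstacle: it requires the heaviest elimination, and it closes only thanks to the manifest positivity of the residual polynomial $\beta^2(\alpha^2+\beta^2-\alpha\delta+\delta^2)+\alpha^2\delta^2$ when $\beta\neq 0$. Finally, for the converse one substitutes each configuration (i)--(v) back into the six equations of (3.33) and verifies that they are satisfied and compatible with $\alpha+\delta\neq 0$, $\alpha\gamma=0$ and $\overline{\lambda}\neq 0$; this is routine.
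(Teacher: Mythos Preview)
Your proof is correct and follows essentially the same route as the paper's own argument: the same case split on $\alpha=0$ versus $\alpha\neq 0$ (forcing $\gamma=0$), the same subcases on $\beta$, and the same elimination in the final subcase $\alpha\neq 0$, $\beta\neq 0$, $\gamma=0$ leading to the identity $\beta^2(\alpha^2+\beta^2-\alpha\delta+\delta^2)+\alpha^2\delta^2=0$. Your exposition is in fact a bit more explicit than the paper's in two places: you spell out the positivity via $\alpha^2-\alpha\delta+\delta^2=(\alpha-\tfrac{\delta}{2})^2+\tfrac34\delta^2\ge 0$, and you note that in case~(iii) the condition $\overline{\lambda}\neq 0$ amounts to $\beta^2\neq\delta^2$, which the paper leaves implicit.
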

\begin{proof}
We know that $\alpha\gamma=0$ and $\alpha+\delta\neq 0$.\\
case i)$\alpha=0$, then $\delta\neq 0$. By (3.33), we have $\lambda=\lambda_3=0$ and
\begin{align}
\left\{\begin{array}{l}
\beta(\lambda_2-\delta)=0,\\
-\gamma\lambda_2+2\beta\delta+\beta\overline{\lambda}=0,\\
\beta\lambda_1-(\beta^2+\beta\gamma)=0,\\
-\beta\lambda_1-\delta\lambda_2+\beta\gamma+2\delta^2+\delta\overline{\lambda}=0.\\
\end{array}\right.
\end{align}
Case i)-a)$\beta=0$, then by (3.34), we have $\gamma\lambda_2=0$ and $\lambda_2=2\delta+\overline{\lambda}$.\\
Casei)-a)-1)$\gamma=0$, we get (i).\\
casei)-a)-2)$\gamma\neq 0$, we get $\lambda_2=0$ and $\overline{\lambda}=-2\delta$. So we have (ii).\\
Case i)-b)$\beta\neq 0$, then by (3.34), we have $\lambda_1=\beta+\gamma$, $\lambda_2=\delta$,
$\overline{\lambda}=\frac{\gamma\delta-2\beta\delta}{\beta}$. By the fourth equation in (3.34), we get
 $\gamma=\frac{\beta^3+\beta\delta^2}{\delta^2}$ and this is (iii).\\
 Case ii) $\alpha\neq 0$, so $\gamma=0$.\\
 Case ii)-a) $\beta=0$, by (3.33), we get $\lambda_1=\lambda_2=0$, $\lambda=\alpha^2$, $\delta\lambda_3=\alpha\delta+2\delta^2+\delta\overline{\lambda}$, $\lambda_3=-\frac{\alpha^2}{\overline{\lambda}}$.\\
 Case ii)-a)-1) $\delta=0$, we get (iv).\\
 Case ii)-a)-2) $\delta\neq 0$, we get (v).\\
 Case ii)-b) $\beta\neq 0$, we get $\alpha\lambda_2+\beta\lambda_1-\beta^2=0$ and $\alpha\lambda_1-\beta\lambda_2+\beta(\delta-\alpha)=0$. So get
 \begin{align}
 \left\{\begin{array}{l}
\lambda_1=\beta-\frac{\alpha\beta\delta}{\alpha^2+\beta^2},\\
\lambda_2=\frac{\beta^2\delta}{\alpha^2+\beta^2},\\
\lambda=\frac{\alpha\beta^2\delta}{\alpha^2+\beta^2}+\alpha^2,\\
\lambda_3=\overline{\lambda}+\alpha+2\delta+\frac{\alpha^2\delta}{\alpha^2+\beta^2}.\\
\end{array}\right.
\end{align}
 Using (3.35) and the fifth equation in (3.33), we get $\beta^2(\alpha^2-\alpha\delta+\delta^2+\beta^2)+\alpha^2\delta^2=0$, so we get $\beta=0$ and this is a contradiction. So we have no solutions in this case.
 \end{proof}
\vskip 1 true cm

\section{Acknowledgements}

The author was supported in part by NSFC No.11771070.

\vskip 1 true cm


\bigskip
\bigskip

\noindent {\footnotesize {\it Y. Wang} \\
{School of Mathematics and Statistics, Northeast Normal University, Changchun 130024, China}\\
{Email: wangy581@nenu.edu.cn}

\end{document}